\newtheorem{theorem}{Theorem}
\newtheorem{definition}[theorem]{Definition}
\newtheorem{lemma}[theorem]{Lemma}
\newtheorem{remark}[theorem]{Remark}
\newtheorem{proposition}[theorem]{Poposition}
\begin{document}

\title{Asymptotic analysis of maximum likelihood estimation of covariance parameters for Gaussian processes: an introduction with proofs}

% Use \titlerunning{Short Title} for an abbreviated version of
% your contribution title if the original one is too long
\author{Fran\c{c}ois Bachoc \\
Institut de math\'ematique, UMR5219; \\
 Universit\'e de Toulouse; \\
CNRS, UPS IMT, F-31062 Toulouse Cedex 9, France, \\ \url{francois.bachoc@math.univ-toulouse.fr}}

%
% Use the package "url.sty" to avoid
% problems with special characters
% used in your e-mail or web address
%
\maketitle

\abstract{This article provides an introduction to the asymptotic analysis of covariance parameter estimation for Gaussian processes. Maximum likelihood estimation is considered.
The aim of this introduction is to be accessible to a wide audience and to present some existing results and proof techniques from the literature.
The increasing-domain and fixed-domain asymptotic settings are considered. Under increasing-domain asymptotics, it is shown that in general all the components of the covariance parameter can be estimated consistently by maximum likelihood and that asymptotic normality holds. In contrast, under fixed-domain asymptotics, only some components of the covariance parameter, constituting the microergodic parameter, can be estimated consistently. Under fixed-domain asymptotics, the special case of the family of isotropic Mat\'ern  covariance functions is considered. It is shown that only a combination of the variance and spatial scale parameter is microergodic. A consistency and asymptotic normality proof is sketched for maximum likelihood estimators.}

\section{Introduction}
\label{sec:intro}

Kriging \cite{stein99interpolation,rasmussen06gaussian} consists of inferring the values of a (Gaussian) process given observations at a finite set of points.
It has become a popular method for a
large range of applications, such as geostatistics \cite{matheron70theorie}, numerical code approximation \cite{sacks89design,santner03design,bachoc16improvement}, calibration \cite{paulo12calibration,bachoc14calibration,kennedy2001bayesian}, global optimization \cite{jones98efficient}, and machine learning \cite{rasmussen06gaussian}.

If the mean and covariance function of the Gaussian process are known, then the unknown values of the Gaussian process can be predicted based on Gaussian conditioning \cite{rasmussen06gaussian,santner03design}.
Confidence intervals are associated to the predictions.
In addition, in the case where the observation points of the Gaussian process can be selected, efficient goal-oriented sequential sampling techniques are available, for instance for optimization \cite{jones98efficient} or estimation of failure domains \cite{bect12}.

Nevertheless, the mean and covariance functions are typically unknown, so that the above methods are typically carried out based on a mean and covariance function selected by the user, that differ from the true ones. Here we shall consider the case where the mean function is known to be equal to zero and the covariance function is known to belong to a parametric set of covariance functions. In this case, selecting a covariance function amounts to estimating the covariance parameter. Large estimation errors of the covariance parameter can be harmful to the quality of the above methods based on Gaussian processes. Hence, one may hope to obtain theoretical guarantees that estimators of the covariance parameters converge to the true ones.

Here we will review some of such guarantees in the case of maximum likelihood estimation \cite{rasmussen06gaussian,stein99interpolation}, that is the most standard estimation method of covariance parameters. The two main settings for these guarantees are the increasing and fixed-domain asymptotic frameworks. Under increasing-domain asymptotics, we will show that, generally speaking, the covariance parameter is fully estimable consistently and asymptotic normality holds. Under fixed-domain asymptotics only a subcomponent of the covariance parameter, called the microergodic parameter, can be estimated consistently. We will show that the microergodic parameter is estimated consistently by maximum likelihood in the case of the family of isotropic Mat\'ern covariance functions, with asymptotic normality. In both asymptotic settings, we will provide sketches of the proofs. We will also highlight the technical differences between the proofs in the two settings.

The rest of the article is organized as follows. Gaussian processes, estimation of covariance parameters and maximum likelihood are introduced in Section \ref{sec:notation}. Increasing-domain asymptotics is studied in Section \ref{section:increasing}.
Fixed-domain asymptotics is studied in Section \ref{sec:fixed:domain}. Concluding remarks and pointers to additional references are provided in Section \ref{sec:conclusion}. A supplementary material contains the asymptotic normality results for the Mat\'ern model and the expressions of means and covariances of quadratic forms of a Gaussian vector.

\section{Framework and notations}
\label{sec:notation}

\subsection{Gaussian processes and covariance functions}

We consider a Gaussian process $\xi : \mathbb{R}^d \to \mathbb{R}$. We recall that $\xi$ is a stochastic process such that for any $m \in \mathbb{N}$ and for any $u_1,\ldots,u_m \in \mathbb{R}^d$, the random vector $(\xi(u_1),\ldots,\xi(u_m))$ is a Gaussian vector \cite{rasmussen06gaussian}. Here and in the rest of the paper, $\mathbb{N}$ is the set of positive integers.

We assume throughout that $\xi$ has mean function zero, that is $\mathbb{E}( \xi( u ) ) = 0$ for $u \in \mathbb{R}^d$. Thus, the distribution of $\xi$ is characterized by its covariance function 
\[
(u,v) \in \mathbb{R}^{2d} \mapsto \mathrm{cov}(  \xi(u) , \xi(v)).
\]
We assume in all the paper that the covariance function of $\xi$ is stationary, that is there exists a function $k^\star : \mathbb{R}^d \to \mathbb{R}$ such that for $u,v \in \mathbb{R}^d$,
\[
 \mathrm{cov}(  \xi(u) , \xi(v)) = k^\star( u-v ).
\] 
In a slight abuse of language, we will also refer to $k^\star$ as the (stationary) covariance function of $\xi$. The function $k^\star$ is symmetric because for $u \in \mathbb{R}^d$, $k^\star(u) = \mathrm{cov}(\xi(u),\xi(0)) = \mathrm{cov}(\xi(0),\xi(u)) = k^\star(-u)$.
This function is positive definite in the sense of the following definition.

\begin{definition} \label{def:positive:definite}
A function $\phi : \mathbb{R}^d \to \mathbb{R}$ is positive definite if for any $m \in \mathbb{N}$ and for any $u_1,\ldots,u_m \in \mathbb{R}^d$, the $m \times m$ matrix $[\phi(u_i - u_j)]_{i,j=1,\ldots,m}$ is positive semi-definite.
\end{definition}

The function $k^\star$ is positive definite because the matrices $[k^\star(u_i - u_j)]_{i,j=1,\ldots,m}$ of the form of Definition \ref{def:positive:definite} are covariance matrices (of Gaussian vectors).

We then consider a set of functions $\{k_{\theta} ; \theta \in \Theta\}$ where $\Theta \subset \mathbb{R}^p$ and where for $\theta \in \Theta$, $k_{\theta}$ is a function from $\mathbb{R}^d \to \mathbb{R}$ that is symmetric and positive definite. We also call $k_{\theta}$ a covariance function and $\theta$ a covariance parameter for $\theta \in \Theta$.

The set $\{k_{\theta} ; \theta \in \Theta\}$ is a set of candidate covariance functions for $\xi$, that is, this set is known to the statistician who aims at selecting an appropriate parameter $\theta$ such that $k_{\theta}$ is as close as possible to $k^\star$.
In the rest of the paper, we will consider that $k^\star$ belongs to $\{k_{\theta} ; \theta \in \Theta\}$. Hence, there exists $\theta_0 \in \Theta$ such that $k^\star = k_{\theta_0}$. This setting is called the well-specified case in \cite{bachoc13parametric,Bachoc2013cross,bachoc2018asymptotic}. Under this setting, we have a classical parametric statistical estimation problem, where the goal is to estimate the true covariance parameter $\theta_0$.

\subsection{Classical families of covariance functions}

For $q \in \mathbb{N}$ and for a vector $x$ in $\mathbb{R}^q$, we let let $||x||$ be the Euclidean norm of $x$.
A first classical family of covariance functions is composed by the isotropic exponential ones with $\Theta \subset (0,\infty)^2$ and
\[
k_{\theta} (x) = \sigma^2 e^{ - \alpha || x ||},
\]
for $\theta =  (\sigma^2 , \alpha)$ and $x \in \mathbb{R}^d$. 
A second classical family is composed by the isotropic Gaussian covariance functions, with
$\Theta \subset (0,\infty)^2$ and
\[
k_{\theta} (x) = \sigma^2 e^{ - \alpha^2 || x ||^2},
\]
for $\theta =  (\sigma^2 , \alpha)$ and $x \in \mathbb{R}^d$. 

Finally, a third classical family is composed by the isotropic Mat\'ern covariance functions, with 
$\Theta \subset (0,\infty)^3$ and
\begin{equation} \label{eq:isotropic:matern}
k_{\theta} (x) = \frac{\sigma^2 2^{1-\nu}}{\Gamma(\nu)} \left (\alpha||x||
  \right )^{\nu} {\cal K}_{\nu} \left (\alpha ||x|| \right ),
\end{equation}
where $\Gamma$ is the gamma function, ${\cal K}_{\nu}$ is the modified Bessel function of the second kind,  for $\theta =  (\sigma^2 , \alpha , \nu)$ and $x \in \mathbb{R}^d$. These families of covariance functions, and other ones, can be found for instance in \cite{bevilacqua2019estimation,genton2015cross,gneiting2004stochastic,rasmussen06gaussian,santner03design,stein99interpolation}.
We remark that the isotropic exponential covariance functions are special cases of the isotropic Mat\'ern covariance functions with $\nu = 1/2$ \cite{stein99interpolation}.

For these three families of covariance functions, one can check that $k_{\theta}(0) = \sigma^2$ (in the Mat\'ern case the function is extended at zero by continuity). Hence $\sigma^2$ is called the variance parameter, because if $\xi$ has covariance function $k_{\theta}$ we have $\mathrm{var}(\xi(u)) = \sigma^2$ for $u \in \mathbb{R}^d$. In these three families of covariance functions, for $u,v \in \mathbb{R}^d$, if $\xi$ has covariance function $k_{\theta}$ we have that $\mathrm{cov}(\xi(u),\xi(v))$ depends on $\alpha ||u-v||$. Hence $\alpha$ is called the spatial scale parameter because changing $\alpha$ can be interpreted as changing the spatial scale when measuring differences between input locations of $\xi$. In the three examples, $k_{\theta}(x)$ is a decreasing function of $||x||$, thus a large $\alpha$ makes the covariance decrease more quickly with $||x||$ and provides a small spatial scale of variation of $\xi$. Conversely, a small $\alpha$ makes the covariance decrease more slowly and provides a large spatial scale of variation of $\xi$. 

Finally, for the family of Mat\'ern covariance functions, $\nu$ is called the smoothness parameter. To interpret this, for $\theta \in \Theta$, let us call spectral density the function $\hat{k}_{\theta} : \mathbb{R}^d \to \mathbb{R}$ such that for $u \in \mathbb{R}^d$
\[
k_\theta(u) = \int_{\mathbb{R}^d} 
\hat{k}_{\theta} (\omega) e^{\mathrm{i} \omega^\top u} d \omega,
\]
with $\mathrm{i}^2 = -1$. Under mild regularity assumptions, that hold for the three families above, the function $\hat{k}_{\theta}$ is the Fourier transform of $k_{\theta}$. When $k_{\theta}$ is a Mat\'ern covariance function, we have
\begin{equation} \label{eq:matern:FT}
\hat{k}_{\theta}(\omega)
=
\sigma^2
\frac{\Gamma(\nu+d/2) \alpha^{2 \nu}}{\Gamma(\nu) \pi^{d/2} }
%\frac{\sigma^2 (1/\alpha)^{2 \nu}}{((\alpha)^2+ \omega^2)^{\nu+d/2}}
\frac{ 1}{(\alpha^2+ ||\omega||^2 )^{\nu+d/2}},
\end{equation}
for $\omega \in \mathbb{R}^d$ \cite{gneiting2010matern}. Hence, we see that for larger $\nu$, the Fourier transform $\hat{k}_{\theta}(\omega)$ converges to zero faster as $||\omega|| \to \infty$, which implies that the function $k_{\theta}$ is smoother at zero (this function is already infinitely differentiable on $\mathbb{R}^d \backslash \{0\}$). This is why $\nu$ is called the smoothness parameter. 

There is an important body of literature on the interplay between the smoothness of the covariance function of $\xi$ and the smoothness of $\xi$ \cite{Adler81,adler1990introduction,Azais2009level}. In our case, if $\xi$ has an exponential covariance function, then it is continuous and not differentiable (almost surely and in quadratic mean).  If $\xi$ has a Gaussian covariance function, then it is infinitely differentiable (almost surely and in quadratic mean). The Mat\'ern covariance functions provide, so to speak, a continuum of smoothness in between these two cases. Indeed, consider $\xi$ with Mat\'ern covariance function with smoothness parameter $\nu >0$. Then $\xi$ is $m$ times differentiable (almost surely and in quadratic mean) if $\nu > m$.

\subsection{Maximum likelihood}

Consider a sequence $(s_i)_{i \in \mathbb{N}}$ of spatial locations at which we observe $\xi$, with $s_i \in \mathbb{R}^d$. Assume from now on that the locations $(s_i)_{i \in \mathbb{N}}$ are two-by-two distinct. Then, for $n \in \mathbb{N}$, we consider the Gaussian observation vector $y = (y_1,\ldots,y_n)^\top =(\xi(s_1) , \ldots , \xi(s_n))^\top$. 

We consider a family of covariance functions $\{k_{\theta} ; \theta \in \Theta \}$ and assume further that for $n \in \mathbb{N}$, the covariance matrix $R_{\theta} := [k_{\theta}(s_i-s_j)]_{i,j=1,\ldots,n}$ is invertible. Then, when $\xi$ has covariance function $k_{\theta}$, the Gaussian density of $y$ is
\[
\mathcal{L}_n( \theta )
=
\frac{1}{ \sqrt{ |R_{\theta}| } (2 \pi)^{n/2} }
e^{ -\frac{1}{2} y^{\top} R_{\theta}^{-1} y },
\]
with $|R_{\theta}|$ the determinant of $R_{\theta}$. The focus of this paper will be on maximum likelihood estimation. A maximum likelihood estimator is a (measurable) estimator of $\theta_0$ that satisfies
\begin{equation} \label{eq:ML}
\hat{\theta}_{\text{ML}}
\in 
\underset{\theta \in \Theta}{\mathrm{argmax}}
\mathcal{L}_n( \theta ).
\end{equation}
We remark that, in general, there may not be a unique estimator $\hat{\theta}_{\text{ML}}$ satisfying \eqref{eq:ML}. Furthermore, the existence of measurable estimators satisfying \eqref{eq:ML} is not a trivial problem. We refer for instance to \cite{gine2016mathematical,molchanov2005theory} on this point.

In this paper, we assume that there exists at least one measurable estimator satisfying \eqref{eq:ML} and the results hold for any choice of such an estimator. A notable particular case is when $\Theta = (0,\infty)$, $\theta = \sigma^2$ and $k_{\theta} = \sigma^2 k^\star$. In this case, there is a unique estimator satisfying \eqref{eq:ML} (see also \eqref{eq:hat:sigma:matern} in Section \ref{sec:fixed:domain}). In this special case, we can call $\hat{\theta}_{\text{ML}}$ the maximum likelihood estimator. In general, one may rather call it a maximum likelihood estimator.

It is convenient to consider the following decreasing transformation of the logarithm of the likelihood, 
\begin{equation} \label{eq:Ln}
L_n(\theta) = \frac{1}{n} \log (|R_{\theta}|) + \frac{1}{n}
y^\top R_{\theta}^{-1} y,
\end{equation}
for $\theta \in \Theta$. We have 
\[
\hat{\theta}_{\text{ML}}
\in 
\underset{\theta \in \Theta}{\mathrm{argmin}}
L_n( \theta ).
\]

The problem of studying the asymptotic properties of $\hat{\theta}_{\text{ML}}$ as $n \to \infty$ presents several differences compared to the most standard parametric estimation setting where the observations are independent and identically distributed \cite{van2000asymptotic}. Indeed, in our case the components of the observation vector $y$ are dependent, so the logarithm of the likelihood is not a sum of independent random variables. Furthermore, the likelihood function involves the quantities $|R_{\theta}|$ and $ R_{\theta}^{-1}$ for which, often, no explicit expressions exist. Finally, for asymptotic statistics with independent and identically distributed data, there is a single asymptotic setting as $n \to \infty$. Here there exist several possible asymptotic settings, depending on how the  spatial locations $s_1,\ldots,s_n$ behave as $n \to \infty$. The proof techniques and the results obtained strongly depend on the asymptotic setting. We will now review some results under the two main existing asymptotic frameworks: increasing-domain and fixed-domain asymptotics.

\section{Increasing-domain asymptotics} \label{section:increasing}

In Section \ref{section:increasing}, we assume that there exists a fixed $\Delta >0$ such that
\begin{equation} \label{eq:smallest:distance:Delta}
\inf_{\substack{i,j \in \mathbb{N} \\ i \neq j}}
||s_i - s_j|| \geq \Delta.
\end{equation}
This assumption is the main assumption considered in the literature for increasing-domain asymptotics (see \cite{bachoc14asymptotic} for instance and see also \cite{bachoc2018asymptotic} for one of the few exceptions). This assumption implies that the spatial locations $(s_i)_{i \in \mathbb{N}}$ are not restricted to a bounded set. The results and proofs that will be presented in Section \ref{section:increasing} can mainly be found in  \cite{bachoc14asymptotic}.

\subsection{Consistency} \label{sub:consistency}

Here the aim is to show that $\hat{\theta}_{\text{ML}}$ converges to $\theta_0$, weakly. 
We consider a general family of covariance functions $\{ k_{\theta} ; \theta \in \Theta \}$, where $\Theta$ is compact, that satisfies 
\begin{equation} \label{eq:sup:k:theta}
\sup_{\theta \in \Theta}
| k_{\theta}(x) | \leq 
\frac{C_{\sup}}{1+||x||^{d + C_{\inf}}}
\end{equation}
and
\begin{equation} \label{eq:sup:partial:k:theta}
\max_{s=1,2,3}
~
\max_{\substack{ i_1,\ldots , i_s = \\  1,\ldots,p}}
~
\sup_{\theta \in \Theta}
\left| \frac{\partial^s}{\partial \theta_{i_1},\ldots,\partial \theta_{i_s}} k_{\theta}(x) \right| \leq 
\frac{C_{\sup}}{1+||x||^{d + C_{\inf}}},
\end{equation}
where $0 < C_{\inf}$ 
and $C_{\sup} < \infty$
are fixed constants and for $x \in \mathbb{R}^d$.

We also assume that 
\begin{equation} \label{eq:condition:FT}
(\theta,\omega) \in \Theta \times \mathbb{R}^d \mapsto \hat{k}_{\theta}(\omega) \quad \text{is continuous and strictly positive.}
\end{equation}

The families of isotropic exponential, Gaussian and Mat\'ern covariance functions do satisfy \eqref{eq:sup:k:theta} and \eqref{eq:sup:partial:k:theta}, when $\Theta$ is compact, and $\nu$ is fixed for Mat\'ern. Indeed, these functions and their partial derivatives, with respect to $\sigma^2$ and $\alpha$, are exponentially decaying as $||x|| \to \infty$, where $x$ is their input. For the exponential and Gaussian covariance functions this can be seen simply and for the Mat\'ern covariance function, this follows from the properties of the modified Bessel functions of the second kind \cite{AS64}. Also, when $\Theta$ is compact, exponentially decaying functions bounding the covariance functions and their partial derivatives can be chosen uniformly over $\theta \in \Theta$ (see again \cite{AS64} for the Mat\'ern covariance functions).

These three families of covariance functions also satisfy \eqref{eq:condition:FT}. The expressions of the Fourier transforms of these covariance functions can be found for instance in \cite{gneiting2010matern} and \cite{stein99interpolation}.

Then the next lemma enables to control the term $R_{\theta}^{-1}$ in \eqref{eq:Ln}. We let $\lambda_{\inf}(M)$ be the smallest eigenvalue of a symmetric matrix $M$.

\begin{lemma}[Proposition D.4 in \cite{bachoc14asymptotic}, Theorem 5 in \cite{bachoc2016smallest}] \label{lem:smallest:eigenvalue}
Assume that \eqref{eq:smallest:distance:Delta}, \eqref{eq:sup:k:theta} and \eqref{eq:condition:FT} hold.
We have
\[
\inf_{n \in \mathbb{N}}
\inf_{\theta \in \Theta}
\lambda_{\inf} (R_{\theta}) >0.
\]
\end{lemma}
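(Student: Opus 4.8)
The plan is to bound the quadratic form $a^\top R_\theta a$ from below by $c\|a\|^2$ for all $a \in \mathbb{R}^n$, with $c>0$ independent of $n$, $\theta$ and $a$; this is equivalent to the stated bound on $\lambda_{\inf}(R_\theta)$. Writing $f_a(\omega) = \sum_{i=1}^n a_i e^{\mathrm{i}\omega^\top s_i}$, I would first use the spectral representation $k_\theta(x) = \int_{\mathbb{R}^d} \hat{k}_\theta(\omega) e^{\mathrm{i}\omega^\top x} d\omega$, which is legitimate because \eqref{eq:sup:k:theta} makes $k_\theta$ integrable, together with Fubini to obtain
\[
a^\top R_\theta a = \sum_{i,j=1}^n a_i a_j k_\theta(s_i - s_j) = \int_{\mathbb{R}^d} \hat{k}_\theta(\omega)\, |f_a(\omega)|^2 \, d\omega .
\]
The task then becomes bounding below this weighted energy of a trigonometric sum whose frequencies are the $\Delta$-separated points $s_i$.

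The key idea is to replace $\hat{k}_\theta$ by a fixed, $\theta$-independent minorant that is both smooth and compactly supported in $\omega$. Using \eqref{eq:condition:FT} and the compactness of $\Theta$, for any radius $A$ the quantity $\delta_A := \inf_{\theta \in \Theta,\, \|\omega\| \le A} \hat{k}_\theta(\omega)$ is strictly positive. Fixing an even, nonnegative $\phi_0 \in C_c^\infty(\mathbb{R}^d)$ supported in the unit ball with $\phi_0(0) > 0$, I would set $\mu_A(\omega) = (\delta_A/\|\phi_0\|_\infty)\, \phi_0(\omega/A)$, so that $0 \le \mu_A(\omega) \le \hat{k}_\theta(\omega)$ for every $\theta$ and $\omega$. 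Since $|f_a(\omega)|^2 \ge 0$, the same computation gives $a^\top G_A a = \int_{\mathbb{R}^d} \mu_A(\omega)\,|f_a(\omega)|^2 d\omega$, whence $a^\top R_\theta a \ge a^\top G_A a$, where $G_A := [g_A(s_i - s_j)]$ and $g_A(x) = \int_{\mathbb{R}^d}\mu_A(\omega)e^{\mathrm{i}\omega^\top x}d\omega$. Because $\mu_A$ is smooth and compactly supported, $g_A$ is real, even and Schwartz, and $g_A(x) = (\delta_A/\|\phi_0\|_\infty)\, A^d g_0(Ax)$ with $g_0$ the inverse transform of $\phi_0$; in particular $g_A(0) = (\delta_A/\|\phi_0\|_\infty)\, A^d g_0(0) > 0$.

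It then remains to make $G_A$ uniformly positive definite, for which I would use a diagonal-dominance (Gershgorin) bound coming from $2|a_i a_j| \le a_i^2 + a_j^2$ and the symmetry of $g_A$:
\[
a^\top G_A a \ge g_A(0)\|a\|^2 - \Bigl(\max_i \sum_{j \ne i} |g_A(s_i - s_j)|\Bigr)\|a\|^2 .
\]
Here the separation assumption \eqref{eq:smallest:distance:Delta} is essential. Using the rapid decay of $g_0$, say $|g_0(x)| \le C_{d+1}(1+\|x\|)^{-(d+1)}$, and grouping the $s_j$ into spherical shells around $s_i$, the number of $s_j$ at distance in $[k\Delta,(k+1)\Delta)$ is of order at most $k^{d-1}$, so that the ratio $\bigl(\max_i \sum_{j \ne i}|g_A(s_i - s_j)|\bigr)/g_A(0) = \bigl(\max_i \sum_{j \ne i}|g_0(A(s_i - s_j))|\bigr)/g_0(0)$ is bounded by a convergent shell sum that tends to $0$ as $A \to \infty$, uniformly in $i$ and $n$. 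Fixing $A$ large enough that this ratio is at most $1/2$ yields $a^\top R_\theta a \ge a^\top G_A a \ge \tfrac{1}{2} g_A(0)\|a\|^2$ for all $n$, $\theta$ and $a$, hence $\inf_n \inf_\theta \lambda_{\inf}(R_\theta) \ge \tfrac{1}{2} g_A(0) > 0$.

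I expect the main obstacle to be precisely this last uniform control of the off-diagonal mass: one must choose the minorant so that its spatial part decays fast enough to be summable over a $\Delta$-separated set, which forces $\mu_A$ to be smooth and hence motivates the compact-support and scaling construction above, and one must check that the shell-counting bound is genuinely independent of $n$ and of the center $i$. The role of each hypothesis is then transparent: \eqref{eq:condition:FT} provides the positive minorant $\delta_A$, \eqref{eq:sup:k:theta} legitimizes the spectral representation, and \eqref{eq:smallest:distance:Delta} is what prevents the points from clustering and driving $\lambda_{\inf}(R_\theta)$ to zero.
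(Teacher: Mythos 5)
Your proof is correct and follows essentially the same route as the paper: it opens with the identical spectral representation \eqref{eq:quadratic:form:FT}, and your band-limited minorant construction (a compactly supported smooth $\mu_A$ lying below $\hat{k}_\theta$ uniformly in $\theta$, whose rapidly decaying inverse transform is handled by diagonal dominance over the $\Delta$-separated points) is precisely the type of argument the paper defers to Proposition D.4 of \cite{bachoc14asymptotic} and Theorem 5 of \cite{bachoc2016smallest} for the uniformity of the infimum. Indeed your write-up supplies the uniformity step the paper's sketch omits, and the details check out: $\delta_A>0$ uses the compactness of $\Theta$ together with \eqref{eq:condition:FT}, the ratio of off-diagonal mass to $g_A(0)$ is independent of $\delta_A$, and the shell sum $\sum_{k\geq 1} k^{d-1}\left(1+Ak\Delta\right)^{-(d+1)}$ tends to $0$ as $A \to \infty$ uniformly in $n$ and in the center $i$.
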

\begin{proof}[sketch]
We have, for $n \in \mathbb{N}$ and $\lambda_1 , \ldots , \lambda_n \in \mathbb{R}$,
\begin{align} \label{eq:quadratic:form:FT}
\sum_{i,j=1}^n
\lambda_i \lambda_j (R_{\theta})_{i,j}
& = 
\sum_{i,j=1}^n
\lambda_i \lambda_j 
k_{\theta}(s_i - s_j)
\notag
\\
& = 
\sum_{i,j=1}^n
\lambda_i \lambda_j
\int_{\mathbb{R}^d}
\hat{k}_{\theta}(\omega)
e^{\mathrm{i} \omega^\top (s_i - s_j)}
d \omega
\notag
\\
& = 
\int_{\mathbb{R}^d}
\hat{k}_{\theta}(\omega)
\left(
\sum_{i,j=1}^n
\lambda_i \lambda_j
e^{\mathrm{i} \omega^\top s_i}
e^{ - \mathrm{i} \omega^\top   s_j}
\right)
d \omega
\notag
\\
& = 
\int_{\mathbb{R}^d}
\hat{k}_{\theta}(\omega)
\left|
\sum_{i=1}^n
\lambda_i 
e^{\mathrm{i} \omega^\top s_i}
\right|^2
d \omega,
\end{align}
where $|z|$ is the modulus of a complex number $z$. In \eqref{eq:quadratic:form:FT}, $\hat{k}_{\theta}(\omega)$ is strictly positive. Furthermore, because $s_1, \ldots, s_n$ are two-by-two distinct, the family of functions $(\omega \mapsto e^{\mathrm{i} \omega^\top s_i})_{i=1,\ldots,n}$ is linearly independent. Hence, $\sum_{i,j=1}^n
\lambda_i \lambda_j (R_{\theta})_{i,j} >0$ for $(\lambda_1,\ldots,\lambda_n) \neq 0$. This shows that $\lambda_{\inf}(R_{\theta}) >0$ for $n \in \mathbb{N}$ and $\theta \in \Theta$. Proving that the infimum in the lemma is also strictly positive is also based on \eqref{eq:quadratic:form:FT}. We refer to the proofs of  Proposition D.4 in \cite{bachoc14asymptotic} or of Theorem 5 in \cite{bachoc2016smallest}.
\end{proof}

The next lemma will enable to control the variance of the likelihood criterion and the order of magnitude of its derivatives.

\begin{lemma}  \label{lem:lik:and:grad:converge}
Assume that \eqref{eq:smallest:distance:Delta}, \eqref{eq:sup:k:theta}, \eqref{eq:sup:partial:k:theta} and \eqref{eq:condition:FT} hold.
For any $\theta \in \Theta$, as $n \to \infty$,
\[
\mathrm{var}(L_n(\theta)) = o(1).
\]
Furthermore
\[
\max_{i=1,\ldots,p}
\sup_{\theta \in \Theta}
\left|
 \frac{\partial}{\partial \theta_i} L_n(\theta)
 \right|
 = O_p(1).
\]
\end{lemma}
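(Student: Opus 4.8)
The plan is to treat the two statements separately, exploiting that the first term $\frac{1}{n}\log|R_\theta|$ in \eqref{eq:Ln} is deterministic, so that it drops out of the variance entirely and is the easy part of the gradient. The common engine will be a pair of uniform operator-norm bounds (I write $\|\cdot\|_{\mathrm{op}}$ for the largest eigenvalue in modulus). On one hand, $\sup_{n}\sup_{\theta\in\Theta}\|R_\theta^{-1}\|_{\mathrm{op}}<\infty$, which is exactly the content of Lemma \ref{lem:smallest:eigenvalue}. On the other hand, I would establish uniform bounds on the largest eigenvalues of $R_{\theta_0}$ and of the derivative matrices $R_\theta^{(i)}:=[\partial k_\theta(s_a-s_b)/\partial\theta_i]_{a,b=1,\ldots,n}$ (which are symmetric since $k_\theta$ is even). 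For these I would invoke Gershgorin's theorem to bound the relevant operator norm by $\max_a\sum_b C_{\sup}/(1+\|s_a-s_b\|^{d+C_{\inf}})$ via \eqref{eq:sup:k:theta} and \eqref{eq:sup:partial:k:theta}; the separation condition \eqref{eq:smallest:distance:Delta} yields a packing bound on the number of locations in each spherical shell, and since the decay exponent $d+C_{\inf}$ strictly exceeds $d$, these row sums are bounded uniformly in $a$, $n$, and (for the derivatives) $\theta$.

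For the variance, since only the quadratic form is random and $y\sim\mathcal{N}(0,R_{\theta_0})$, I would apply the standard identity $\mathrm{var}(y^\top M y)=2\,\mathrm{tr}(MR_{\theta_0}MR_{\theta_0})$ for symmetric $M$, taken with $M=\frac{1}{n}R_\theta^{-1}$, giving $\mathrm{var}(L_n(\theta))=\frac{2}{n^2}\mathrm{tr}(R_\theta^{-1}R_{\theta_0}R_\theta^{-1}R_{\theta_0})$. Setting $C:=R_\theta^{-1/2}R_{\theta_0}R_\theta^{-1/2}$ (symmetric positive definite), the cyclic property of the trace gives that this trace equals $\mathrm{tr}(C^2)\le n\|C\|_{\mathrm{op}}^2\le n\|R_\theta^{-1}\|_{\mathrm{op}}^2\|R_{\theta_0}\|_{\mathrm{op}}^2$. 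Hence $\mathrm{var}(L_n(\theta))\le\frac{2}{n}\|R_\theta^{-1}\|_{\mathrm{op}}^2\|R_{\theta_0}\|_{\mathrm{op}}^2=O(1/n)=o(1)$, using the two uniform bounds above.

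For the gradient, I would differentiate \eqref{eq:Ln} using $\partial_{\theta_i}\log|R_\theta|=\mathrm{tr}(R_\theta^{-1}R_\theta^{(i)})$ and $\partial_{\theta_i}R_\theta^{-1}=-R_\theta^{-1}R_\theta^{(i)}R_\theta^{-1}$, obtaining $\frac{\partial}{\partial\theta_i}L_n(\theta)=\frac{1}{n}\mathrm{tr}(R_\theta^{-1}R_\theta^{(i)})-\frac{1}{n}\,y^\top R_\theta^{-1}R_\theta^{(i)}R_\theta^{-1}y$. The deterministic term is $O(1)$ uniformly in $\theta$ and $i$, since $|\mathrm{tr}(R_\theta^{-1}R_\theta^{(i)})|\le n\,\|R_\theta^{-1}\|_{\mathrm{op}}\|R_\theta^{(i)}\|_{\mathrm{op}}$. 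For the random term I would use the crude pathwise bound $|y^\top R_\theta^{-1}R_\theta^{(i)}R_\theta^{-1}y|\le\|R_\theta^{-1}\|_{\mathrm{op}}^2\|R_\theta^{(i)}\|_{\mathrm{op}}\,\|y\|^2$, which factors all of the $\theta$-dependence into operator norms that are uniformly bounded; what remains is $\frac{1}{n}\|y\|^2$, whose expectation is $\frac{1}{n}\mathrm{tr}(R_{\theta_0})=k_{\theta_0}(0)$, so that $\frac{1}{n}\|y\|^2=O_p(1)$ by Markov's inequality. Taking the maximum over the finitely many indices $i=1,\ldots,p$ then delivers the claimed $O_p(1)$.

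I would flag that the only genuinely delicate step is the uniform largest-eigenvalue bound on $R_{\theta_0}$ and $R_\theta^{(i)}$, resting on the interplay of the separation assumption and the polynomial decay; everything else is routine. In particular, the crude operator-norm factorization in the random gradient term is what lets me bypass any empirical-process or uniform-convergence argument over $\theta$, since the conclusion requires only stochastic boundedness rather than a convergence rate.
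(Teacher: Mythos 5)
Your proposal is correct and follows essentially the same route as the paper: the quadratic-form variance identity $\mathrm{var}(y^\top M y)=2\,\mathrm{tr}(M R_{\theta_0} M R_{\theta_0})$ combined with Gershgorin row-sum bounds (via \eqref{eq:smallest:distance:Delta}, \eqref{eq:sup:k:theta}, \eqref{eq:sup:partial:k:theta}) and the uniform bound on $\lambda_{\sup}(R_\theta^{-1})$ from Lemma \ref{lem:smallest:eigenvalue}, then the explicit gradient formula with the crude operator-norm factorization reducing everything to $\|y\|^2/n = O_p(1)$. The only difference is cosmetic: you sketch the packing argument for the summability of the row sums, where the paper cites \cite{bachoc14asymptotic}.
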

\begin{proof}[sketch]
Using that $y$ is a centered Gaussian vector, we have, with $\mathrm{cov}(z)$ the covariance matrix of a random vector $z$, from Appendix B in the supplementary material,
\[
\mathrm{var}(L_n(\theta))
=
\frac{1}{n^2}
\mathrm{var}(y^\top R_{\theta}^{-1} y)
= 
\frac{2}{n^2}
\mathrm{tr}
\left(
R_{\theta}^{-1} \mathrm{cov}(y)
R_{\theta}^{-1} \mathrm{cov}(y)
\right)
= 
\frac{2}{n^2}
\mathrm{tr}
\left(
R_{\theta}^{-1} R_{\theta_0}
R_{\theta}^{-1} R_{\theta_0}
\right).
\]
Let $\lambda_{\sup}(M)$ be the largest eigenvalue of a symmetric matrix $M$. From Gershgorin circle theorem, we have
\begin{align*}
\lambda_{\sup} (R_{\theta_0})
& \leq 
\max_{i=1,\ldots,n}
\sum_{j=1}^n \left| (R_{\theta_0})_{i,j} \right|
\\
& = 
\max_{i=1,\ldots,n}
\sum_{j=1}^n \left| k_{\theta_0}(s_i - s_j) \right|
\\
(\text{from} ~ \eqref{eq:sup:k:theta}:)
& \leq 
\max_{i=1,\ldots,n}
\sum_{j=1}^n 
\frac{C_{\sup}}{1+||s_i - s_j||^{d + C_{\inf}}}.
\end{align*}
It is shown in \cite{bachoc14asymptotic} that \eqref{eq:smallest:distance:Delta} implies that 
\[
\max_{i=1,\ldots,\infty}
\sum_{j=1}^{\infty}
\frac{C_{\sup}}{1+||s_i - s_j||^{d + C_{\inf}}}
< \infty.
\]
Hence there is a constant $A_1 < \infty$ such that $\lambda_{\sup}( R_{\theta_0} ) \leq A_1$. Also, from Lemma \ref{lem:smallest:eigenvalue}, there is a constant $A_2 < \infty$ such that $\sup_{\theta \in \Theta} \lambda_{\sup}( R_{\theta}^{-1} ) \leq A_2$. 
Hence, we have $\mathrm{var}(L_n(\theta)) \leq  2 A_1^2 A_2^2  /n$ which proves the first part of the lemma.

For the second part of the lemma, let $\rho_{\sup}(M)$ be the largest singular value of a matrix $M$. Using Gershgorin circle theorem again, together with \eqref{eq:sup:partial:k:theta}, we show that there is a constant $A_3 < \infty$ such that,
\[
\max_{i=1,\ldots,p}
\sup_{\theta \in \Theta} \rho_{\sup}
\left(
  \frac{
  \partial R_{\theta}
  }{
  \partial \theta_i 
  }
  \right)
  \leq A_3.
\]
With this, we have
\begin{align*}
\max_{i=1,\ldots,p}
\sup_{\theta \in \Theta}
\left|
 \frac{\partial}{\partial \theta_i} L_n(\theta)
 \right|
 &
=
\max_{i=1,\ldots,p}
\sup_{\theta \in \Theta}
\left|
\frac{1}{n}
\mathrm{tr}
\left(
R_{\theta}^{-1}
\frac{\partial R_{\theta}}{ \partial \theta_i}
\right)
-
\frac{1}{n}
y^\top 
 R_{\theta}^{-1}
 \frac{\partial R_{\theta}}{ \partial \theta_i}
  R_{\theta}^{-1}
  y
  \right|
  \\
  & \leq 
A_2   A_3
+
A_2^2 A_3 
\frac{||y||^2}{n}.
\end{align*}
This last quantity is a $O_p(1)$ because $||y||^2/n$ is non-negative with (bounded) expectation $\mathrm{var}(\xi(0))$.
\end{proof}

The consistency result will rely on the following asymptotic identifiability assumption. We assume that for all $\epsilon >0$,
\begin{equation} \label{eq:identifiability:global}
\liminf_{n \to \infty}
\inf_{\substack{\theta \in \Theta \\ ||\theta - \theta_0|| \geq \epsilon}}
\frac{1}{n}
\sum_{i,j=1}^n
\left(
k_{\theta}(s_i - s_j)
-
k_{\theta_0}(s_i - s_j)
\right)^2
> 0.
\end{equation}
This assumption means that for $\theta$ bounded away from $\theta_0$, there is sufficient information in the spatial locations $s_1,\ldots,s_n$ to distinguish between the two covariance functions $k_{\theta}$ and $k_{\theta_0}$. In \cite{bachoc14asymptotic}, an explicit example is provided for which \eqref{eq:identifiability:global} holds. 

We remark that, even though there are $n^2$ terms in the sum in \eqref{eq:identifiability:global}, this sum can be shown to be a $O(n)$ for any fixed $\theta \in \Theta$, because of \eqref{eq:sup:k:theta} (by proceeding as in the proof of Lemma \ref{lem:lik:and:grad:converge}). The intuition is that, asymptotically, for many pairs $i,j \in \{1, \ldots,n \}$, $k_{\theta}(s_i - s_j)$
and
$k_{\theta_0}(s_i - s_j)$
are small. This is why the normalization factor is $1/n$ rather than $1/n^2$ in \eqref{eq:identifiability:global}.

With the assumption \eqref{eq:identifiability:global}, we can now state the consistency result.

\begin{theorem}[\cite{bachoc14asymptotic}]
Assume that \eqref{eq:smallest:distance:Delta}, \eqref{eq:sup:k:theta}, \eqref{eq:sup:partial:k:theta}, \eqref{eq:condition:FT} and \eqref{eq:identifiability:global} hold.
As $n \to \infty$
\[
\hat{\theta}_{\text{ML}}
\to^p 
\theta_0.
\]
\end{theorem}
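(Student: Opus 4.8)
\section*{Proof proposal}

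The plan is to follow the classical M-estimator (here argmin) consistency scheme, adapted to the fact that the criterion $L_n$ in \eqref{eq:Ln} has a ``mean'' that drifts with $n$. Set $E_n(\theta) := \mathbb{E}(L_n(\theta))$; since $y$ is centered Gaussian with covariance $R_{\theta_0}$, the quadratic-form expectation (as in the proof of Lemma \ref{lem:lik:and:grad:converge}) gives $E_n(\theta) = \frac{1}{n}\log|R_{\theta}| + \frac{1}{n}\mathrm{tr}(R_{\theta}^{-1} R_{\theta_0})$. I would reduce the theorem to two assertions: (i) a uniform law of large numbers $\sup_{\theta \in \Theta} |L_n(\theta) - E_n(\theta)| \to^p 0$; and (ii) a separation property, namely that for every $\epsilon > 0$ one has $\delta_\epsilon := \liminf_{n \to \infty} \inf_{||\theta - \theta_0|| \geq \epsilon} (E_n(\theta) - E_n(\theta_0)) > 0$. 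Granting (i) and (ii), consistency follows by the usual sandwich: on $\{ ||\hat{\theta}_{\text{ML}} - \theta_0|| \geq \epsilon \}$ one has, for large $n$, $\delta_\epsilon \leq E_n(\hat{\theta}_{\text{ML}}) - E_n(\theta_0) \leq [L_n(\hat{\theta}_{\text{ML}}) - L_n(\theta_0)] + 2 \sup_{\theta} |L_n - E_n| \leq 2 \sup_{\theta} |L_n - E_n|$, where the middle bracket is nonpositive because $\hat{\theta}_{\text{ML}}$ minimizes $L_n$; by (i) the right-hand side is $o_p(1)$, so $\mathbb{P}(||\hat{\theta}_{\text{ML}} - \theta_0|| \geq \epsilon) \to 0$.

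For (i), pointwise convergence is immediate: for fixed $\theta$, $\mathrm{var}(L_n(\theta)) = o(1)$ by Lemma \ref{lem:lik:and:grad:converge}, so $L_n(\theta) - E_n(\theta) \to^p 0$ by Chebyshev. To upgrade this to uniform convergence over the compact set $\Theta$, I would use the gradient bound of Lemma \ref{lem:lik:and:grad:converge}, which states $\sup_{\theta} \max_i |\partial L_n / \partial \theta_i| = O_p(1)$; that is, $L_n$ is stochastically Lipschitz in $\theta$. A parallel computation (differentiating $E_n$ and reusing the eigenvalue bounds below) shows $E_n$ is equi-Lipschitz in $n$. One then covers $\Theta$ by finitely many balls of radius $\eta$, controls $|L_n - E_n|$ at the finite set of centers by the pointwise result, fills in the gaps using the two Lipschitz bounds, and finally lets $\eta \to 0$. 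This stochastic-equicontinuity step is where I expect the main difficulty to lie: because $E_n$ itself drifts with $n$, there is no fixed limit function, and the uniformity must be extracted entirely from the envelope on the gradient rather than from a classical Glivenko--Cantelli argument.

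For (ii), I would exploit that $E_n(\theta) - E_n(\theta_0) = \frac{1}{n}\big[\log(|R_{\theta}|/|R_{\theta_0}|) + \mathrm{tr}(R_{\theta}^{-1} R_{\theta_0}) - n\big]$ is $\tfrac{2}{n}$ times the Kullback--Leibler divergence of $\mathcal{N}(0, R_{\theta_0})$ from $\mathcal{N}(0, R_{\theta})$. Writing $\mu_1, \ldots, \mu_n > 0$ for the eigenvalues of the symmetric matrix $R_{\theta}^{-1/2} R_{\theta_0} R_{\theta}^{-1/2}$, this equals $\frac{1}{n}\sum_{i=1}^n (\mu_i - 1 - \log \mu_i)$. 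The eigenvalues lie in a fixed compact subinterval $[m, M] \subset (0, \infty)$ uniformly in $n$ and $\theta$: Lemma \ref{lem:smallest:eigenvalue} bounds $\lambda_{\inf}(R_{\theta})$ and $\lambda_{\inf}(R_{\theta_0})$ away from $0$, while Gershgorin together with \eqref{eq:sup:k:theta} bounds $\lambda_{\sup}(R_{\theta})$ and $\lambda_{\sup}(R_{\theta_0})$ above (exactly as for $A_1$ in the proof of Lemma \ref{lem:lik:and:grad:converge}). On $[m, M]$ the function $g(\mu) = \mu - 1 - \log \mu$ satisfies $g(\mu) \geq c(\mu - 1)^2$ for some $c > 0$, so with $|| \cdot ||_F$ the Frobenius norm,
\[
E_n(\theta) - E_n(\theta_0)
\;\geq\;
\frac{c}{n} \sum_{i=1}^n (\mu_i - 1)^2
=
\frac{c}{n} \big\| R_{\theta}^{-1/2}(R_{\theta_0} - R_{\theta}) R_{\theta}^{-1/2} \big\|_F^2
\;\geq\;
\frac{c}{n\, \lambda_{\sup}(R_{\theta})^2} \| R_{\theta_0} - R_{\theta} \|_F^2.
\]

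The final Frobenius norm is precisely $\sum_{i,j=1}^n (k_{\theta_0}(s_i - s_j) - k_{\theta}(s_i - s_j))^2$, and since $\lambda_{\sup}(R_{\theta})$ is uniformly bounded, assumption \eqref{eq:identifiability:global} yields $\delta_\epsilon > 0$, establishing (ii). Combining (i) and (ii) through the sandwich above completes the proof. The one quantitative lemma I would isolate and verify carefully is the lower bound $g(\mu) \geq c(\mu-1)^2$ on $[m,M]$, together with the matrix inequality $\|A X A\|_F \geq \sigma_{\min}(A)^2 \|X\|_F$ used in the last display; both are elementary but are what let me transfer \eqref{eq:identifiability:global}, stated on the raw covariance entries, into a separation statement on the Kullback--Leibler-type criterion $E_n$.
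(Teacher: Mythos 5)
Your proposal is correct and follows essentially the same route as the paper's sketch: a uniform law of large numbers for $L_n - \mathbb{E}(L_n(\cdot))$ obtained from the variance and gradient bounds of Lemma \ref{lem:lik:and:grad:converge}, combined with the separation bound \eqref{eq:diff:exp:larger} and the identifiability condition \eqref{eq:identifiability:global}, and concluded by the standard M-estimator sandwich argument. The only substantive difference is that you supply a full proof of the key inequality \eqref{eq:diff:exp:larger} --- via the Kullback--Leibler/eigenvalue representation $\frac{1}{n}\sum_i(\mu_i - 1 - \log \mu_i)$, the uniform spectral bounds from Lemma \ref{lem:smallest:eigenvalue} and Gershgorin, the quadratic lower bound on $\mu \mapsto \mu - 1 - \log\mu$, and the Frobenius-norm inequality --- whereas the paper's sketch simply cites this step from \cite{bachoc14asymptotic}; your derivation of it is sound.
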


\begin{proof}[sketch]
From Lemma \ref{lem:lik:and:grad:converge} we have, for any $\theta \in \Theta$,
\[
L_n(\theta) - \mathbb{E}( L_n(\theta) ) \to^p_{n \to \infty} 0. 
\]
Furthermore one can show, similarly as in Lemma \ref{lem:lik:and:grad:converge},
\[
\max_{i=1,\ldots,p}
\sup_{\theta \in \Theta}
\left|
 \frac{\partial}{\partial \theta_i} \mathbb{E} ( L_n(\theta))
 \right|
 = O(1).
\]
Hence, using Lemma \ref{lem:lik:and:grad:converge}, we obtain
\begin{equation} \label{eq:lik:mius:exp:unif}
\sup_{\theta \in \Theta}
\left|
  L_n(\theta)
 -
  \mathbb{E} ( L_n(\theta))
 \right|
 = o_p(1).
\end{equation}
Next, it is shown in \cite{bachoc14asymptotic} that there exists a constant $A_4 > 0$ such that for $\theta \in \Theta$
\begin{equation} \label{eq:diff:exp:larger}
  \mathbb{E} ( L_n(\theta))
  -
    \mathbb{E} ( L_n(\theta_0))
    \geq 
    A_4
    \frac{1}{n}
\sum_{i,j=1}^n
\left(
k_{\theta}(s_i - s_j)
-
k_{\theta_0}(s_i - s_j)
\right)^2.
\end{equation}
From \eqref{eq:diff:exp:larger} and \eqref{eq:identifiability:global}, we then obtain, for $\epsilon >0$, with a strictly positive constant $A_5$, for $n$ large enough,
\begin{equation} \label{eq:for:consistency}
\inf_{\substack{\theta \in \Theta \\ ||\theta - \theta_0|| \geq \epsilon}}
\left(
 \mathbb{E} ( L_n(\theta))
  -
    \mathbb{E} ( L_n(\theta_0))
    \right)
    \geq 
    A_5.
\end{equation}
Combining \eqref{eq:lik:mius:exp:unif} and \eqref{eq:for:consistency} enables to conclude the proof with a standard M-estimator argument (for instance as in the proof of Theorem 5.7 in \cite{van2000asymptotic}). 
\end{proof}

\subsection{Asymptotic normality}

For $i \in \{ 1 , \ldots , p \}$, we have seen in the proof of Lemma \ref{lem:lik:and:grad:converge} that the $i$-th partial derivative of $L_n$ at $\theta_0$ is
\[
\frac{\partial}{\partial \theta_i} L_n(\theta_0)
=
\frac{1}{n}
\mathrm{tr}
\left(
R_{\theta_0}^{-1}
\frac{\partial R_{\theta_0}}{ \partial \theta_i}
\right)
-
\frac{1}{n}
y^\top 
 R_{\theta_0}^{-1}
 \frac{\partial R_{\theta_0}}{ \partial \theta_i}
  R_{\theta_0}^{-1}
  y.
\]
Since $y$ is a centered Gaussian vector and using Appendix B in the supplementary material, the element $i,j$ of the covariance matrix of the gradient of $L_n$ at $\theta$ is thus, for $i,j=1,\ldots,p$,
\begin{align} \label{eq:cov:score}
\mathrm{cov}
\left(
\frac{\partial}{\partial \theta_i} L_n(\theta_0)
,
\frac{\partial}{\partial \theta_j} L_n(\theta_0)
\right)
& =
\frac{2}{n^2}
\mathrm{tr}
\left(
 R_{\theta_0}^{-1}
 \frac{\partial R_{\theta_0}}{ \partial \theta_i}
  R_{\theta_0}^{-1}
    R_{\theta_0}
     R_{\theta_0}^{-1}
 \frac{\partial R_{\theta_0}}{ \partial \theta_j}
  R_{\theta_0}^{-1}
    R_{\theta_0}
\right) \notag \\
& =
\frac{2}{n^2}
\mathrm{tr}
\left(
 R_{\theta_0}^{-1}
 \frac{\partial R_{\theta_0}}{ \partial \theta_i}
     R_{\theta_0}^{-1}
 \frac{\partial R_{\theta_0}}{ \partial \theta_j}
\right).
\end{align}

It is shown in \cite{bachoc14asymptotic} that for $i,j\in \{1,\ldots,p\}$,
\[
\mathbb{E}
\left(
\frac{\partial^2}{\partial \theta_i \partial \theta_j} L_n(\theta_0)
\right)
=
\frac{1}{n}
\mathrm{tr}
\left(
 R_{\theta_0}^{-1}
 \frac{\partial R_{\theta_0}}{ \partial \theta_i}
     R_{\theta_0}^{-1}
 \frac{\partial R_{\theta_0}}{ \partial \theta_j}
\right).
\]

We will thus need to ensure that the $p \times p$ matrix with element $i,j$ equal to 
\[
\frac{1}{n}
\mathrm{tr}
\left(
 R_{\theta_0}^{-1}
 \frac{\partial R_{\theta_0}}{ \partial \theta_i}
     R_{\theta_0}^{-1}
 \frac{\partial R_{\theta_0}}{ \partial \theta_j}
\right)
\]
is asymptotically invertible. For this, we assume that for all  $(\lambda_1 , \ldots , \lambda_p) \in \mathbb{R}^p \backslash \{0\} $,
\begin{equation} \label{eq:identifiability:local}
\liminf_{n \to \infty}
\frac{1}{n}
\sum_{i,j=1}^n
\left(
\sum_{m=1}^p
\lambda_m
\frac{\partial k_{\theta_0}(s_i - s_j)}{\partial \theta_m}
\right)^2
> 0.
\end{equation}
This assumption is interpreted as a local identifiability condition around $\theta_0$. In \cite{bachoc14asymptotic}, an explicit example is provided for which \eqref{eq:identifiability:local} holds.

We can now state the asymptotic normality result for maximum likelihood estimators.

\begin{theorem}  \label{thm:increasing:normality}
Assume that \eqref{eq:smallest:distance:Delta}, \eqref{eq:sup:k:theta}, \eqref{eq:sup:partial:k:theta}, \eqref{eq:condition:FT}, \eqref{eq:identifiability:global} and \eqref{eq:identifiability:local} hold.
Let $\Sigma_{\theta_0}$ be the $p \times p$ matrix with element $i,j$ equal to
\[
\frac{1}{2}
\frac{1}{n}
\mathrm{tr}
\left(
 R_{\theta_0}^{-1}
 \frac{\partial R_{\theta_0}}{ \partial \theta_i}
     R_{\theta_0}^{-1}
 \frac{\partial R_{\theta_0}}{ \partial \theta_j}
\right).
\] 
Then 
\begin{equation} \label{eq:lambda:inf:lambda:sup}
0 < \liminf_{n \to \infty}\lambda_{\inf} (\Sigma_{\theta_0})
\leq
\limsup_{n \to \infty}
\lambda_{\sup} (\Sigma_{\theta_0})
< \infty.
\end{equation}
Furthermore, with $M^{-1/2}$ the unique symmetric matrix square root of $M^{-1}$ for a symmetric strictly positive definite $M$, we have
\begin{equation} \label{eq:the:TCL}
\sqrt{n}
\left( \Sigma_{\theta_0}^{-1}\right)^{-1/2} (\hat{\theta}_{\text{ML}} - \theta_0)
\to_{n \to \infty}^d
\mathcal{N}( 0 , I_p).
\end{equation}
\end{theorem}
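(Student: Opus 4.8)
The plan is to treat $\hat{\theta}_{\text{ML}}$ as an M-estimator and run the classical Taylor-expansion argument, reducing \eqref{eq:the:TCL} to three ingredients: the spectral bounds \eqref{eq:lambda:inf:lambda:sup} on $\Sigma_{\theta_0}$, a central limit theorem for the quadratic-form score $\nabla L_n(\theta_0)$, and the convergence of the Hessian of $L_n$ near $\theta_0$. Since $\Theta$ is compact and $\theta_0$ is (implicitly) interior, the consistency theorem guarantees that with probability tending to one $\hat{\theta}_{\text{ML}}$ is an interior minimizer, so $\nabla L_n(\hat{\theta}_{\text{ML}}) = 0$. Applying a mean value theorem componentwise to $\nabla L_n$ produces a (generally non-symmetric) matrix $H_n$, whose rows are gradients of the components of $\nabla L_n$ evaluated at intermediate points $\bar{\theta}^{(i)} \to^p \theta_0$, such that
\[
\sqrt{n}\,(\hat{\theta}_{\text{ML}} - \theta_0) = - H_n^{-1} \sqrt{n}\, \nabla L_n(\theta_0).
\]
It then remains to show that $\sqrt{n}\, \nabla L_n(\theta_0)$ is asymptotically Gaussian with covariance $4 \Sigma_{\theta_0}$ and that $H_n - 2\Sigma_{\theta_0} \to^p 0$, both expressed through the bounded-spectrum matrix $\Sigma_{\theta_0}$.

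For \eqref{eq:lambda:inf:lambda:sup}, write $D_i := \partial R_{\theta_0}/\partial\theta_i$. For $\lambda \in \mathbb{R}^p$ and $B = \sum_m \lambda_m D_m$, one has $\lambda^\top \Sigma_{\theta_0} \lambda = \tfrac{1}{2n}\| R_{\theta_0}^{-1/2} B R_{\theta_0}^{-1/2} \|_F^2$, and bounding the conjugation by the extreme eigenvalues of $R_{\theta_0}$ gives
\[
\frac{\|B\|_F^2}{2n\, \lambda_{\sup}(R_{\theta_0})^2} \le \lambda^\top \Sigma_{\theta_0} \lambda \le \frac{\|B\|_F^2}{2n\, \lambda_{\inf}(R_{\theta_0})^2}.
\]
Here $\lambda_{\sup}(R_{\theta_0})$ is bounded above (Gershgorin plus the summability consequence of \eqref{eq:sup:k:theta}, exactly as in Lemma \ref{lem:lik:and:grad:converge}) and $\lambda_{\inf}(R_{\theta_0})$ is bounded below by Lemma \ref{lem:smallest:eigenvalue}, while $\tfrac{1}{n}\|B\|_F^2 = \tfrac{1}{n}\sum_{i,j}(\sum_m \lambda_m \partial k_{\theta_0}(s_i-s_j)/\partial\theta_m)^2$ is bounded above for $\|\lambda\|=1$ by \eqref{eq:sup:partial:k:theta} and bounded below (in the liminf) by the local identifiability assumption \eqref{eq:identifiability:local}; a compactness argument over the unit sphere of $\mathbb{R}^p$ makes the lower bound uniform, which yields \eqref{eq:lambda:inf:lambda:sup}.

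The central limit theorem for the score is the crux, and I expect it to be the \emph{main obstacle}. Whitening $y = R_{\theta_0}^{1/2} z$ with $z \sim \mathcal{N}(0,I_n)$, each component reads $\partial L_n(\theta_0)/\partial\theta_i = -\tfrac{1}{n}(z^\top A_i z - \mathrm{tr}(A_i))$ with $A_i = R_{\theta_0}^{-1/2} D_i R_{\theta_0}^{-1/2}$ symmetric, a centered Gaussian quadratic form whose covariance is identified by \eqref{eq:cov:score} as $\tfrac{4}{n}\Sigma_{\theta_0}$. I would prove $(4\Sigma_{\theta_0})^{-1/2} \sqrt{n}\, \nabla L_n(\theta_0) \to^d \mathcal{N}(0,I_p)$ by the Cramér–Wold device: for fixed unit $u$, the scalar $u^\top (4\Sigma_{\theta_0})^{-1/2} \sqrt{n}\, \nabla L_n(\theta_0)$ equals, up to sign, $n^{-1/2}(z^\top A z - \mathrm{tr}\, A)$ for $A = R_{\theta_0}^{-1/2}(\sum_m \tilde{c}_m D_m) R_{\theta_0}^{-1/2}$ with $\tilde{c} = (4\Sigma_{\theta_0})^{-1/2} u$, and it has variance exactly one. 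A centered Gaussian quadratic form, written in its eigenbasis as $\sum_k \mu_k (\chi^2_{1,k} - 1)$, is asymptotically normal once the Lyapunov-type ratio $\rho_{\sup}(A)/\|A\|_F = \max_k |\mu_k| / (\sum_k \mu_k^2)^{1/2} \to 0$. Here $\rho_{\sup}(A) = O(1)$, since $\rho_{\sup}(A) \le \lambda_{\sup}(R_{\theta_0}^{-1})\, \rho_{\sup}(\sum_m \tilde{c}_m D_m) = O(1)$ by Lemma \ref{lem:smallest:eigenvalue} and the bound on $\rho_{\sup}(D_m)$ from \eqref{eq:sup:partial:k:theta} (with $\|\tilde{c}\|$ bounded by \eqref{eq:lambda:inf:lambda:sup}), whereas $\|A\|_F^2 = \mathrm{tr}(A^2) = 2n\, \tilde{c}^\top \Sigma_{\theta_0} \tilde{c}$ grows like $n$, again by \eqref{eq:lambda:inf:lambda:sup}. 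Thus the ratio is $O(n^{-1/2}) \to 0$, the scalar CLT holds, and Cramér–Wold gives the vector statement. This triangular-array quadratic-form CLT, with growing dimension and dependence among the $y_i$ encoded in $R_{\theta_0}$, is precisely where the increasing-domain geometry enters through the bounded spectrum of $R_{\theta_0}$ and the identifiability lower bound.

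Finally I would control the Hessian. As in Lemma \ref{lem:lik:and:grad:converge}, the entries of $\nabla^2 L_n(\theta_0)$ are quadratic forms in $y$ with variances $O(1/n)$, so $\nabla^2 L_n(\theta_0) - \mathbb{E}[\nabla^2 L_n(\theta_0)] \to^p 0$, and $\mathbb{E}[\nabla^2 L_n(\theta_0)] = 2\Sigma_{\theta_0}$. The third derivatives of $L_n$ are $O_p(1)$ uniformly over $\Theta$ — this is exactly why \eqref{eq:sup:partial:k:theta} is imposed up to order $s=3$ — so $\|\nabla^2 L_n(\bar{\theta}^{(i)}) - \nabla^2 L_n(\theta_0)\| \le \|\bar{\theta}^{(i)} - \theta_0\|\, O_p(1) = o_p(1)$ by consistency, whence $H_n - 2\Sigma_{\theta_0} \to^p 0$ in operator norm. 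Since $\lambda_{\inf}(\Sigma_{\theta_0})$ is bounded below, $G_n := (2\Sigma_{\theta_0})^{-1/2} H_n (2\Sigma_{\theta_0})^{-1/2} \to^p I_p$ and is invertible with probability tending to one. Substituting $H_n^{-1} = (2\Sigma_{\theta_0})^{-1/2} G_n^{-1} (2\Sigma_{\theta_0})^{-1/2}$ and using that powers of $\Sigma_{\theta_0}$ commute, the Taylor identity becomes
\[
\sqrt{n}\, \Sigma_{\theta_0}^{1/2} (\hat{\theta}_{\text{ML}} - \theta_0) = - G_n^{-1}\, (4\Sigma_{\theta_0})^{-1/2} \sqrt{n}\, \nabla L_n(\theta_0),
\]
where the prefactor $-G_n^{-1} \to^p -I_p$ and the remaining factor converges in distribution to $\mathcal{N}(0,I_p)$ by the previous paragraph. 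Slutsky's lemma then yields \eqref{eq:the:TCL}, upon noting that $(\Sigma_{\theta_0}^{-1})^{-1/2} = \Sigma_{\theta_0}^{1/2}$.
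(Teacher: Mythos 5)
Your proposal is correct and follows essentially the same route as the paper's proof: the M-estimator expansion reducing \eqref{eq:the:TCL} to a CLT for the score, the Cram\'er--Wold reduction of the score to a centered Gaussian quadratic form diagonalized as a weighted sum of centered $\chi^2_1$ variables with the Lindeberg/Lyapunov condition checked via a spectral-norm bound $O(1)$ against a Frobenius norm of order $\sqrt{n}$, and the bounds \eqref{eq:lambda:inf:lambda:sup} obtained from Lemma \ref{lem:smallest:eigenvalue}, Gershgorin-type bounds and \eqref{eq:identifiability:local}. The only difference is that you supply self-contained arguments (the Frobenius-norm sandwich by conjugation with $R_{\theta_0}^{-1/2}$, and the mean-value expansion with uniform third-derivative control) for the two steps the paper delegates to \cite{bachoc14asymptotic}, and these match the intended arguments behind those citations.
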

We remark that in Theorem \ref{thm:increasing:normality}, $ \Sigma_{\theta_0}^{-1}$ is the asymptotic covariance matrix, but this matrix is not necessarily assumed to converge as $n \to \infty$. This matrix has its eigenvalues bounded away from zero and infinity asymptotically, so that the rate of convergence is $\sqrt{n}$ in Theorem \ref{thm:increasing:normality}.

\begin{remark}
Here the element $i,j$ of $n \Sigma_{\theta_0}$ is $n^2 / 4$ times the covariance between the elements $i$ and $j$ of the gradient of $L_n$, from \eqref{eq:cov:score}. Note that $L_n$ is $-2 / n$ times the log-likelihood (up to a constant not depending on $y$ or $\theta$). Consider now the score vector that is equal to the gradient of the log-likelihood. Then, we obtain that the covariance between the elements $i$ and $j$ of the score is $n^2 / 4$  times $4 / n^2$ times the element $i,j$ of $n \Sigma_{\theta_0}$.

In other words, $n \Sigma_{\theta_0}$ is the (theoretical) Fisher information matrix. In agreement with this, remark that from Theorem \ref{thm:increasing:normality} the inverse of $n \Sigma_{\theta_0}$ provides the asymptotic covariance matrix of maximum likelihood estimators as $n \to \infty$.
\end{remark}

\begin{proof}[sketch]
In \cite{bachoc14asymptotic}, it is shown that there exists a strictly positive constant $A_6$ such that for any $\lambda_1,\ldots,\lambda_p$ with $\lambda_1^2 + \dots + \lambda_p^2 =1$, we have
\[
\sum_{i,j=1}^p
\lambda_i \lambda_j
\frac{1}{2}
\frac{1}{n}
\mathrm{tr}
\left(
 R_{\theta_0}^{-1}
 \frac{\partial R_{\theta_0}}{ \partial \theta_i}
     R_{\theta_0}^{-1}
 \frac{\partial R_{\theta_0}}{ \partial \theta_j}
\right)
\geq 
A_6
\frac{1}{n}
\sum_{i,j=1}^n
\left(
\sum_{m=1}^p
\lambda_m
\frac{\partial k_{\theta_0}(s_i - s_j)}{\partial \theta_m}
\right)^2.
\]
Hence, from \eqref{eq:identifiability:local},
\[
0 < \liminf_{n \to \infty}\lambda_{\inf} (\Sigma_{\theta_0}).
\]
Hence  $\Sigma_{\theta_0}$ is invertible for $n$ large enough. Let $n$ be large enough so that this is the case in the rest of the proof.

One can show as in the proof of Lemma \ref{lem:lik:and:grad:converge} (see also \cite{bachoc14asymptotic})
that
\[
\limsup_{n \to \infty}
\lambda_{\sup} (\Sigma_{\theta_0})
< \infty.
\]

Hence \eqref{eq:lambda:inf:lambda:sup}
is proved. Let us now prove \eqref{eq:the:TCL}.

It is shown in \cite{bachoc14asymptotic} (see also \cite{bachoc2019asymptotic}), using a standard M-estimator argument together with techniques similar as above, that
\begin{align*}
\sqrt{n} ( \hat{\theta}_{\text{ML}} - \theta_0 )
&
=
-
\left(
\left[
\mathbb{E}
\left(
\frac{\partial^2}{\partial \theta_i \partial \theta_j} L_n(\theta_0)
\right)
\right]_{i,j=1,\ldots, p}
\right)^{-1}
\sqrt{n}
\left(
\frac{\partial}{\partial \theta_i } L_n(\theta_0)
\right)_{i=1,\ldots,p}
+o_p(1)
\\
& = 
-
\frac{1}{2}
\Sigma_{\theta_0}^{-1}
\sqrt{n}
\left(
\frac{\partial}{\partial \theta_i } L_n(\theta_0)
\right)_{i=1,\ldots,p}
+o_p(1).
\end{align*}
Hence to conclude the proof, it is sufficient to show that
\[
\left(
4
\Sigma_{\theta_0}
\right)^{-1/2}
\sqrt{n}
\left(
\frac{\partial}{\partial \theta_i } L_n(\theta_0)
\right)_{i=1,\ldots,p}
\to_{n \to \infty}^d
\mathcal{N}(0,I_p).
\]

Let us show this using linear combinations.
Let us write the $p \times 1$ gradient vector
\[
\frac{\partial}{\partial \theta } L_n(\theta_0)
=
\left(
\frac{\partial}{\partial \theta_i } L_n(\theta_0)
\right)_{i=1,\ldots,p}.
\]
Let $\lambda = (\lambda_1,\ldots,\lambda_p)^\top \in \mathbb{R}^p$ be fixed with $\lambda_1^2 + \dots + \lambda_p^2 = 1$. We have
\begin{align*}
\sum_{i=1}^p
\lambda_i 
\left(
\left(
4
\Sigma_{\theta_0}
\right)^{-1/2}
\sqrt{n}
\frac{\partial}{\partial \theta } L_n(\theta_0)
\right)_i
& = 
\sum_{i=1}^p
\left(
\left(
4
\Sigma_{\theta_0}
\right)^{-1/2}
\lambda
\right)_i
\sqrt{n}
\frac{\partial}{\partial \theta_i } L_n(\theta_0).
\\
\end{align*}
Let us now write $\beta_i = \left(
\left(
4
\Sigma_{\theta_0}
\right)^{-1/2}
\lambda
\right)_i$. We have
\begin{align*}
&
\sum_{i=1}^p
\lambda_i 
\left(
\left(
4
\Sigma_{\theta_0}
\right)^{-1/2}
\sqrt{n}
\frac{\partial}{\partial \theta } L_n(\theta_0)
\right)_i
\\
& =
\sum_{i=1}^p
\beta_i
\sqrt{n}
\frac{\partial}{\partial \theta_i } L_n(\theta_0)
\\
& = 
-
\sqrt{n} 
\left(
y^\top 
\left(
\frac{1}{n}
\sum_{i=1}^p
\beta_i
R_{\theta_0}^{-1}
\frac{\partial R_{\theta_0}}{\partial \theta_i}
R_{\theta_0}^{-1}
\right)
y
-
\mathbb{E}
\left(
y^\top 
\left(
\frac{1}{n}
\sum_{i=1}^p
\beta_i
R_{\theta_0}^{-1}
\frac{\partial R_{\theta_0}}{\partial \theta_i}
R_{\theta_0}^{-1}
\right)
y
\right)
\right),
\end{align*}
using for the last equality that the gradient of the logarithm of the likelihood at $\theta_0$ has mean zero.
Letting $z = (z_1,\ldots,z_n)^\top = R_{\theta_0}^{-1/2} y$, the negative of the above quantity is equal to
\begin{equation} \label{eq:for:TCL:one:d}
\sqrt{n}
\left( 
z^\top 
\left(
\frac{1}{n}
\sum_{i=1}^p
\beta_i
R_{\theta_0}^{-1/2}
\frac{\partial R_{\theta_0}}{\partial \theta_i}
R_{\theta_0}^{-1/2}
\right)
z
-
\mathbb{E}
\left(
z^\top 
\left(
\frac{1}{n}
\sum_{i=1}^p
\beta_i
R_{\theta_0}^{-1/2}
\frac{\partial R_{\theta_0}}{\partial \theta_i}
R_{\theta_0}^{-1/2}
\right)
z
\right)
\right)
.
\end{equation}
Letting $\rho_1,\ldots,\rho_n$ be the eigenvalues of $
(1/n)
\sum_{i=1}^p
\beta_i
R_{\theta_0}^{-1/2}
\frac{\partial R_{\theta_0}}{\partial \theta_i}
R_{\theta_0}^{-1/2}$
and letting $w = (w_1,\ldots,w_n) \sim \mathcal{N}( 0 ,I_n)$,
\eqref{eq:for:TCL:one:d} is equal, in distribution, to
\begin{equation} \label{eq:for:lindeberg}
\sqrt{n}
\sum_{i=1}^n
(w_i^2-1) \rho_i.
\end{equation}
Let us show that \eqref{eq:for:lindeberg} converges to a standard Gaussian distribution. We have
\begin{align*}
\mathrm{var} \left( \sqrt{n}
\sum_{i=1}^n
(w_i^2-1) \rho_i \right)  
&
=
2 n \sum_{i=1}^n \rho_i^2
\\
& =
\mathrm{var}
\left(
\sum_{i=1}^p
\lambda_i 
\left(
\left(
4
\Sigma_{\theta_0}
\right)^{-1/2}
\sqrt{n}
\frac{\partial}{\partial \theta } L_n(\theta_0)
\right)_i
\right) 
\\
& =
\lambda^\top
\mathrm{cov} 
\left(
\left(
4
\Sigma_{\theta_0}
\right)^{-1/2}
\sqrt{n}
\frac{\partial}{\partial \theta } L_n(\theta_0)
\right)
\lambda
\\
(\text{from}~\eqref{eq:cov:score}:)  &  =   \lambda^\top I_p \lambda
\\
& = 1.
\end{align*}
One can show as in the proof of Lemma \ref{lem:lik:and:grad:converge} (see also \cite{bachoc14asymptotic}) that $\max_{i=1}^n | \rho_i | = O(1/n)$. Hence, the classical Lindeberg-Feller central limit theorem enables to conclude that \eqref{eq:for:lindeberg} converges to a standard Gaussian distribution (see also \cite{IL97}). This concludes the proof.
\end{proof}

To conclude Section \ref{section:increasing}, the consistency and asymptotic normality results given here are quite generally applicable to families of stationary covariance functions and to Gaussian processes with zero mean functions. Some extensions to non-zero constant mean functions are discussed in \cite{bachoc2019asymptotic}. It would be interesting to provide extensions to non-stationary covariance functions or to unknown non-constant mean functions, with a parametric family of mean functions. It is possible that some of the proof techniques and intermediary results presented in Section \ref{section:increasing} and in \cite{bachoc14asymptotic} would be relevant for these extensions. Nevertheless, new arguments would also need to be developed, and appropriate assumptions, on the non-stationary covariance functions and non-constant mean functions, would need to be considered.

\section{Fixed-domain asymptotics} \label{sec:fixed:domain}

\subsection{What changes}

Under fixed-domain asymptotics, the spatial locations $s_1 , \ldots , s_n$ are restricted to a compact set $D \subset \mathbb{R}^d$. In this case, almost none of the proof techniques above for increasing-domain asymptotics can be applied. Indeed, they are based on the fact that for a given $i \in \{1 , \ldots , n\}$, $\xi (s_i)$ has a very small covariance with $\xi(s_j)$ for most $s_j$, $j=1,\ldots,n$. On the contrary, under fixed-domain asymptotics, for instance if $k_{\theta_0}$ is non-zero on $\mathbb{R}^d$, $\xi(s_i)$ has a non negligible covariance with all the $\xi(s_j)$, $j=1,\ldots,n$.

In particular, contrary to Lemma \ref{lem:smallest:eigenvalue}, if $\theta \in \Theta$ is such that $k_{\theta}$ is continuous at zero, then the smallest eigenvalue of $R_{\theta}$ goes to zero as $n \to \infty$.
This is seen by considering a sequence of $2 \times 2$ submatrices based on $s_{i_n} , s_{j_n}$ with $||s_{i_n} - s_{j_n}|| \to 0$ as $n \to \infty$. Similarly, the largest eigenvalue of $R_{\theta}$ goes to infinity as $n \to \infty$ for any $\theta \in \Theta$ if $k_{\theta}$ is, for instance, non-zero on $\mathbb{R}^d$. 

\subsection{Microergodic and non-microergodic parameters}

The conclusion of Section \ref{section:increasing} on increasing-domain asymptotics is that the family of stationary covariance functions $\{k_{\theta} ; \theta \in \Theta\}$ can be fairly general to prove the consistency and asymptotic normality of maximum likelihood estimators of $\theta_0$. In particular, under the reasonable conditions \eqref{eq:identifiability:global} and \eqref{eq:identifiability:local}, $\theta_0$ can be entirely consistently estimable.

We will now see that, in contrast, for a family $\{k_{\theta} ; \theta \in \Theta\}$ of covariance functions, under fixed-domain asymptotics, it can regularly be the case that $\theta_0$ is not entirely consistently estimable.

The notion that makes this more precise is that of the equivalence of Gaussian measures \cite{ibragimov78gaussian,stein99interpolation}. Consider two covariance parameters $\theta_1 , \theta_2 \in \Theta$, $\theta_1 \neq \theta_2$. If $\xi$ has covariance function $k_{\theta_1}$, $\xi$ yields a measure $\mathcal{M}_{\theta_1}$ on the set of functions from $D$ to $\mathbb{R}$, with respect to the cylindrical sigma-algebra\footnote{If Gaussian processes with continuous realizations on compact sets are considered, one can also define Gaussian measures over the Banach space of continuous functions (on a compact set) endowed with the supremum norm and the corresponding Borel sigma-algebra.}. Similarly, if $\xi$ has covariance function $k_{\theta_2}$, $\xi$ yields a measure $\mathcal{M}_{\theta_2}$. When $D$ is compact, these two measures can be equivalent (for a set $A$ of functions, $\mathcal{M}_{\theta_1}(A) = 0$ if and only if $\mathcal{M}_{\theta_2}(A) = 0$) even when the covariance functions $k_{\theta_1}$ and $k_{\theta_2}$ are different.

The notion of equivalence of Gaussian measures enables to define non-microergodic parameters.

\begin{definition} \label{def:non-micro}
Let $\Phi$ be a function from $\Theta$ to $\mathbb{R}^q$ for $q \in \mathbb{N}$. We say that $\Phi(\theta_0)$ is non-microergodic if there exists $\theta_1 \in \Theta$ such that $\Phi(\theta_1) \neq \Phi(\theta_0)$ and the measures $\mathcal{M}_{\theta_1}$ and $\mathcal{M}_{\theta_0}$ are equivalent.
\end{definition}

If a covariance parameter is non-microergodic, it can not be estimated consistently.

\begin{lemma}
Let $(s_i)_{i \in \mathbb{N}}$ be any sequence of points in $D$.
If $\Phi(\theta_0)$ is non-microergodic, there does not exist a sequence of functions $\hat{\Phi}_n : \mathbb{R}^n \to \mathbb{R}^q$ such that, for any $\theta \in \Theta$, if $\xi$ has covariance function $k_{\theta}$ then $ \hat{\Phi}_n (\xi(s_1) , \ldots , \xi(s_n))$ goes to $\Phi(\theta)$ in probability as $n \to \infty$.
\end{lemma}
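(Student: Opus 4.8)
The plan is to argue by contradiction, exploiting the two defining features of equivalent measures: they have exactly the same null sets, and hence the same full-probability events. Suppose such a sequence $(\hat{\Phi}_n)_{n \in \mathbb{N}}$ of (Borel measurable) functions exists. Let $\theta_1 \in \Theta$ be the parameter furnished by Definition \ref{def:non-micro}, so that $\Phi(\theta_1) \neq \Phi(\theta_0)$ while $\mathcal{M}_{\theta_0}$ and $\mathcal{M}_{\theta_1}$ are equivalent. Applying the assumed consistency first with $\theta = \theta_0$ and then with $\theta = \theta_1$, I obtain that $\hat{\Phi}_n(\xi(s_1), \ldots, \xi(s_n))$ converges in probability to $\Phi(\theta_0)$ when $\xi$ has covariance function $k_{\theta_0}$, and to $\Phi(\theta_1)$ when $\xi$ has covariance function $k_{\theta_1}$.

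First I would upgrade convergence in probability to almost sure convergence along a subsequence. Since convergence in probability entails almost sure convergence along a subsequence, there is a subsequence $(n_k)_{k \in \mathbb{N}}$ such that, under $\mathcal{M}_{\theta_0}$, $\hat{\Phi}_{n_k}(\xi(s_1), \ldots, \xi(s_{n_k})) \to \Phi(\theta_0)$ almost surely. Next I would encode this as an event depending only on the coordinates $(\xi(s_i))_{i \in \mathbb{N}}$: let $A$ be the set of functions $f$ from $D$ to $\mathbb{R}$ such that $\hat{\Phi}_{n_k}(f(s_1), \ldots, f(s_{n_k}))$ converges to $\Phi(\theta_0)$ as $k \to \infty$. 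Because $A$ is built from the countably many evaluations $f \mapsto f(s_i)$, the Borel measurable maps $\hat{\Phi}_{n_k}$, and a countable limiting operation, it belongs to the cylindrical sigma-algebra on which $\mathcal{M}_{\theta_0}$ and $\mathcal{M}_{\theta_1}$ are defined, and the previous step yields $\mathcal{M}_{\theta_0}(A) = 1$.

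The crucial step is then to transport this full-probability event to $\mathcal{M}_{\theta_1}$. Since $\mathcal{M}_{\theta_0}$ and $\mathcal{M}_{\theta_1}$ are equivalent, they have the same null sets, so $\mathcal{M}_{\theta_0}(A^c) = 0$ forces $\mathcal{M}_{\theta_1}(A^c) = 0$, that is $\mathcal{M}_{\theta_1}(A) = 1$. Consequently, under $\mathcal{M}_{\theta_1}$, $\hat{\Phi}_{n_k}(\xi(s_1), \ldots, \xi(s_{n_k}))$ converges to $\Phi(\theta_0)$ almost surely, hence in probability. On the other hand, being a subsequence of a sequence that converges in probability to $\Phi(\theta_1)$ under $\mathcal{M}_{\theta_1}$, it also converges in probability to $\Phi(\theta_1)$. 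By uniqueness of limits in probability, $\Phi(\theta_0) = \Phi(\theta_1)$, contradicting $\Phi(\theta_0) \neq \Phi(\theta_1)$. This contradiction establishes the lemma.

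I expect the main obstacle to be the measurability bookkeeping rather than any analytic difficulty: one must verify that the limiting event $A$ genuinely lies in the cylindrical sigma-algebra carrying both $\mathcal{M}_{\theta_0}$ and $\mathcal{M}_{\theta_1}$, since it is precisely on that sigma-algebra that equivalence of the two measures (equality of null sets) may be invoked. Once this is secured, the remainder is the routine interplay between convergence in probability, subsequential almost sure convergence, and uniqueness of the limit in probability.
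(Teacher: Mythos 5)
Your proposal is correct and follows essentially the same route as the paper's proof: contradiction via the equivalent measure $\mathcal{M}_{\theta_1}$, upgrading convergence in probability to almost sure convergence along a subsequence, encoding this as a probability-one event in the cylindrical sigma-algebra, transporting it to $\mathcal{M}_{\theta_1}$ by equivalence, and deriving the contradiction $\Phi(\theta_0) = \Phi(\theta_1)$. Your explicit attention to the measurability of the limiting event $A$ is a point the paper passes over silently, but it does not change the argument.
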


\begin{proof}
Let $\Phi(\theta_0)$ be non-microergodic. Then fix $ \theta_1 \in \Theta$ such that $\Phi(\theta_1) \neq \Phi(\theta_0)$ and the measures $\mathcal{M}_{\theta_1}$ and $\mathcal{M}_{\theta_0}$ are equivalent.

Assume that  an estimator sequence $\hat{\Phi}_n$ as described in the lemma exists. Then, when $\xi$ has covariance function $k_{\theta_0}$, as $n \to \infty$,
\[
\hat{\Phi}_n (\xi(s_1) , \ldots , \xi(s_n)) \to^p \Phi(\theta_0).
\]
Hence there exists a subsequence $n'$ such that as $n' \to \infty$, almost surely,
\[
\hat{\Phi}_{n'} (\xi(s_1) , \ldots , \xi(s_{n'})) \to \Phi(\theta_0).
\]
This can be written in the form
\[
\mathcal{M}_{\theta_0}
\left(
\left\{
f ~ \text{function from $D$ to $\mathbb{R}$ such that}~
\hat{\Phi}_{n'} (f(s_1) , \ldots , f(s_{n'})) \to_{n' \to \infty} \Phi(\theta_0)
\right\}
\right)
=1.
\]
Then since the measures $\mathcal{M}_{\theta_1}$ and $\mathcal{M}_{\theta_0}$ are equivalent
\[
\mathcal{M}_{\theta_1}
\left(
\left\{
f ~ \text{function from $D$ to $\mathbb{R}$ such that}~
\hat{\Phi}_{n'} (f(s_1) , \ldots , f(s_{n'})) \to_{n' \to \infty} \Phi(\theta_0)
\right\}
\right)
=1.
\]
This means that, when $\xi$ has covariance function $k_{\theta_1}$, the sequence $\hat{\Phi}_{n'} (\xi(s_1) , \ldots , \xi(s_{n'}))$ goes almost surely to $\Phi(\theta_0) \neq \Phi(\theta_1)$. Hence the sequence 
$\hat{\Phi}_{n} (\xi(s_1) , \ldots , \xi(s_{n}))$ does not go to $\Phi(\theta_1)$ in probability as $n \to \infty$. This is a contradiction which concludes the proof.
\end{proof}

Hence, one should not expect to have accurate estimators of non-microergodic parameters under fixed-domain asymptotics. The interpretation of non-microergodic parameters is that, even if $\Phi(\theta_0)$ and $\Phi(\theta_1)$ are different, there is not enough information  in a single realization of the random function $\{\xi(s) ; s \in D\}$ (even if this realization was observed continuously) to distinguish between $\Phi(\theta_0)$ and $\Phi(\theta_1)$. This lack of information stems from the boundedness of $D$.

It is important to remark that there exist results showing that non-microergodic parameters have an asymptotically negligible impact on prediction of unknown values of $\xi$ \cite{AEPRFMCF,BELPUICF,UAOLPRFUISOS,zhang04inconsistent}.
In \cite{stein99interpolation}, this situation is interpreted as an instance of the following principle, called Jeffreys's law: ``things we shall never find much out about cannot be very important for prediction''.

Finally, we can define microergodic parameters.
\begin{definition}
Let $\Phi$ be a function from $\Theta$ to $\mathbb{R}^q$ for $q \in \mathbb{N}$. We say that $\Phi(\theta_0)$ is microergodic if for any $\theta_1 \in \Theta$ such that $\Phi(\theta_1) \neq \Phi(\theta_0)$, the measures $\mathcal{M}_{\theta_1}$ and $\mathcal{M}_{\theta_0}$ are orthogonal (i.e. there exists a set of functions $A$ such that $\mathcal{M}_{\theta_1}(A) =0$ and $\mathcal{M}_{\theta_0} (A)$ = 1).
\end{definition}

\subsection{Consistent estimation of the microergodic parameter of the isotropic Mat\'ern model} \label{subsection:consistency:matern}

Let us now focus on the family of isotropic Mat\'ern covariance functions \eqref{eq:isotropic:matern}, in the case where the smoothness parameter $\nu$ is known. We thus consider $\theta = (\sigma^2 , \alpha) \in \Theta = (0 , \infty) \times [\alpha_{\inf} , \alpha_{\sup}]$ with $0 < \alpha_{\inf} < \alpha_{\sup} < \infty$ fixed. We thus have
\begin{equation} \label{eq:matern:nu:known}
k_{\theta} (x) = \frac{\sigma^2 2^{1-\nu}}{\Gamma(\nu)} \left (\alpha||x||
  \right )^{\nu} {\cal K}_{\nu} \left (\alpha ||x|| \right ),
\end{equation}
for $x \in \mathbb{R}^d$ where $0 <\nu < \infty$ is fixed and known. We let $\theta_0  = (\sigma_0^2 , \alpha_0)$. In the rest of Section \ref{sec:fixed:domain}, we set the dimension as $d \in \{1,2,3\}$.

Then the parameters $\sigma_0^2$ and $\alpha_0$ are non-microergodic, while the parameter $\sigma_0^2 \alpha_0^{2 \nu}$ is microergodic.

\begin{proposition}[\cite{zhang04inconsistent}] \label{prop:microergodic:matern}
With the family of covariance functions given by \eqref{eq:matern:nu:known}, the measures $\mathcal{M}_{\theta_1}$ and  $\mathcal{M}_{\theta_0}$ are equivalent if $\sigma_1^2 \alpha_1^{2 \nu} = \sigma_0^2 \alpha_0^{2 \nu}$ and are orthogonal if $\sigma_1^2 \alpha_1^{2 \nu} \neq \sigma_0^2 \alpha_0^{2 \nu}$. Hence, $\sigma_0^2 \alpha_0^{2 \nu}$ is microergodic, and in particular $\sigma_0^2$ and $\alpha_0$ are non-microergodic.
\end{proposition}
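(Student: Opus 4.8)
The plan is to prove the equivalence/orthogonality dichotomy by working with the spectral densities and invoking the classical criterion for equivalence of Gaussian measures. The key analytic tool is the following sufficient condition (due to Stein, building on Ibragimov--Rozanov): if $\xi$ is a stationary Gaussian process on a compact $D \subset \mathbb{R}^d$ with $d \in \{1,2,3\}$, and if two spectral densities $\hat{k}_0$ and $\hat{k}_1$ satisfy $\hat{k}_1(\omega)/\hat{k}_0(\omega) \to 1$ as $\|\omega\| \to \infty$ fast enough — concretely, if for some $\tau > d/2$ one has $\int_{\mathbb{R}^d} \left( \frac{\hat{k}_1(\omega) - \hat{k}_0(\omega)}{\hat{k}_0(\omega)} \right)^2 (1 + \|\omega\|^2)^{?}\, d\omega < \infty$ in the appropriate tail sense — then $\mathcal{M}_{\theta_0}$ and $\mathcal{M}_{\theta_1}$ are equivalent. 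The restriction $d \le 3$ is exactly what makes the relevant tail integral converge for the Mat\'ern spectrum.

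First I would write out the ratio of the two Mat\'ern spectral densities using the explicit formula \eqref{eq:matern:FT}. For $\theta_j = (\sigma_j^2, \alpha_j)$ with common fixed $\nu$, one gets
\[
\frac{\hat{k}_{\theta_1}(\omega)}{\hat{k}_{\theta_0}(\omega)}
=
\frac{\sigma_1^2 \alpha_1^{2\nu}}{\sigma_0^2 \alpha_0^{2\nu}}
\cdot
\frac{(\alpha_0^2 + \|\omega\|^2)^{\nu + d/2}}{(\alpha_1^2 + \|\omega\|^2)^{\nu + d/2}}.
\]
The second factor tends to $1$ as $\|\omega\| \to \infty$, so the full ratio tends to the constant $\sigma_1^2 \alpha_1^{2\nu} / (\sigma_0^2 \alpha_0^{2\nu})$. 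This is the crux: the ratio converges to $1$ at infinity \emph{if and only if} $\sigma_1^2 \alpha_1^{2\nu} = \sigma_0^2 \alpha_0^{2\nu}$, which already signals that the microergodic quantity is precisely this product.

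For the equivalence direction, I would assume $\sigma_1^2 \alpha_1^{2\nu} = \sigma_0^2 \alpha_0^{2\nu}$ and verify the tail integrability condition. A Taylor expansion of $(\alpha_0^2 + r^2)^{\nu + d/2} / (\alpha_1^2 + r^2)^{\nu + d/2} - 1$ shows this difference decays like $(\alpha_0^2 - \alpha_1^2)(\nu + d/2)/\|\omega\|^2 + O(\|\omega\|^{-4})$, so the integrand of the Stein--Ibragimov--Rozanov criterion behaves like $\|\omega\|^{-4}$ times a polynomial weight; the tail integral converges precisely because $d \le 3$ keeps the surface-area growth $\|\omega\|^{d-1}$ below the decay. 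For the orthogonality direction, when $\sigma_1^2 \alpha_1^{2\nu} \ne \sigma_0^2 \alpha_0^{2\nu}$, the ratio tends to a constant $\ne 1$; by the Hájek--Feldman dichotomy two Gaussian measures are either equivalent or orthogonal, and the failure of the ratio to approach $1$ rules out equivalence, forcing orthogonality. The final sentence of the proposition is then immediate: $\sigma_0^2 \alpha_0^{2\nu}$ is microergodic by Definition, and since one can vary $\sigma_0^2$ and $\alpha_0$ individually while holding the product fixed (producing an equivalent measure with a different $\sigma^2$ and different $\alpha$), both $\sigma_0^2$ and $\alpha_0$ alone are non-microergodic.

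The main obstacle is the equivalence direction: stating and correctly applying the spectral equivalence criterion with the right weight exponent, and carrying out the tail estimate so that the integrability genuinely uses $d \in \{1,2,3\}$. Rather than reproving the spectral criterion, I would cite it from \cite{ibragimov78gaussian,stein99interpolation} and reduce the work to the single tail-integral bound sketched above; the orthogonality direction is then essentially free from the Hájek--Feldman dichotomy.
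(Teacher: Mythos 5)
The paper itself states this proposition without proof, citing \cite{zhang04inconsistent}, so your attempt should be measured against Zhang's argument. Your equivalence half is essentially that argument and is sound: under $\sigma_1^2\alpha_1^{2\nu} = \sigma_0^2\alpha_0^{2\nu}$ the ratio $\hat{k}_{\theta_1}/\hat{k}_{\theta_0} - 1$ decays like $\|\omega\|^{-2}$, its square like $\|\omega\|^{-4}$, and integrating against the surface growth $\|\omega\|^{d-1}$ gives convergence exactly when $d \leq 3$. One repair: the criterion you left with a ``?'' carries no polynomial weight. In the form Zhang uses (from \cite{stein99interpolation}, building on \cite{ibragimov78gaussian}), it suffices that $\hat{k}_{\theta_0}(\omega)$ be bounded above and below by constant multiples of $\|\omega\|^{-(2\nu+d)}$ at infinity and that $\int_{\|\omega\| > K} \bigl( (\hat{k}_{\theta_1}(\omega) - \hat{k}_{\theta_0}(\omega))/\hat{k}_{\theta_0}(\omega) \bigr)^2 \, d\omega < \infty$; your tail estimate then closes this half as written.

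The genuine gap is the orthogonality direction. The spectral condition you invoke is \emph{sufficient} for equivalence only; the H\'ajek--Feldman dichotomy tells you the measures are equivalent or orthogonal, but it cannot convert the failure of a sufficient condition into a proof of non-equivalence. Your claim that ``the failure of the ratio to approach $1$ rules out equivalence'' is precisely the necessity statement that would need proof, and on bounded domains in $d > 1$ such necessity results are delicate and not off-the-shelf. Zhang's actual proof sidesteps this entirely with a reduction you should adopt: given $\sigma_1^2\alpha_1^{2\nu} \neq \sigma_0^2\alpha_0^{2\nu}$, set $\sigma_2^2 = \sigma_0^2\alpha_0^{2\nu}/\alpha_1^{2\nu}$, so that $\mathcal{M}_{(\sigma_2^2,\alpha_1)} $ is equivalent to $\mathcal{M}_{(\sigma_0^2,\alpha_0)}$ by the already-established equivalence half. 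The measures $\mathcal{M}_{(\sigma_1^2,\alpha_1)}$ and $\mathcal{M}_{(\sigma_2^2,\alpha_1)}$ share the same correlation structure and differ only in variance scale $\sigma_1^2 \neq \sigma_2^2$; they are orthogonal by an elementary strong-law argument, since $\frac{1}{n} y^\top \Sigma_{\alpha_1}^{-1} y \to \sigma^2$ almost surely under $\mathcal{M}_{(\sigma^2,\alpha_1)}$, which exhibits an event of probability one under one measure and zero under the other. Finally, if $\mu$ is equivalent to $\nu$ and $\nu$ is orthogonal to $\lambda$, then $\mu$ is orthogonal to $\lambda$, which yields the orthogonality of $\mathcal{M}_{\theta_1}$ and $\mathcal{M}_{\theta_0}$. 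Your concluding sentence on microergodicity of the product and non-microergodicity of $\sigma_0^2$ and $\alpha_0$ separately is correct as stated once both halves are in place.
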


We remark that Proposition \ref{prop:microergodic:matern} holds for $d \in \{ 1 , 2 ,3\}$, which is the ambient assumption in Section \ref{subsection:consistency:matern}. When $d \geq 5$, \cite{anderes2010consistent} proved that the full parameter $(\sigma_0^2 , \alpha_0)$ is microergodic (thus in particular $\sigma_0^2$ and  $\alpha_0$ are microergodic). At the time of \cite{anderes2010consistent}, it was mentioned there that the case $d=4$ was open, that is, it was not known if $\sigma_0^2$ and  $\alpha_0$ are microergodic in this case. Currently, this case is still open, to the best of our knowledge.

Then, \cite{zhang04inconsistent} finds a consistent estimator of $\sigma_0^2 \alpha_0^{2 \nu}$ by fixing $\alpha$ to an arbitrary value and by maximizing the likelihood with respect to $\sigma^2$ only. Hence, for $\alpha \in [\alpha_{\inf} , \alpha_{\sup}]$, let
\[
\hat{\sigma}^2(\alpha)
= \underset{\sigma^2 \in (0,\infty)}{\mathrm{argmin}}
L_n(\sigma^2 , \alpha).
\]
We remark that the $\mathrm{argmin}$ is unique from \eqref{eq:expression:lik:variance} in the proof of Theorem \ref{theorem:fixed:domain:consistency}.
By canceling the derivative of $L_n(\sigma^2 , \alpha)$ with respect to $\sigma^2$, we find
\begin{equation} \label{eq:hat:sigma:matern}
\hat{\sigma}^2 (\alpha)
=
\frac{1}{n}
y^\top 
\Sigma_{\alpha}^{-1}
y,
\end{equation}
with $\Sigma_{\alpha} = R_{\sigma^2 , \alpha} / \sigma^2$, based on \eqref{eq:expression:lik:variance} in the proof of Theorem \ref{theorem:fixed:domain:consistency}.

\begin{theorem}[\cite{zhang04inconsistent}] \label{theorem:fixed:domain:consistency}
Let $\alpha_1$ be any fixed element of $[\alpha_{\inf} , \alpha_{\sup}]$. 
As $n \to \infty$, almost surely,
\[
\hat{\sigma}^2(\alpha_1) \alpha_1^{2 \nu}
\to 
\sigma_0^2 \alpha_0^{2 \nu}.
\]
\end{theorem}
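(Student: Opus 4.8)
The plan is to avoid any direct computation under the true law $\mathcal{M}_{\theta_0}$ and instead exploit the equivalence of Gaussian measures supplied by Proposition~\ref{prop:microergodic:matern}, replacing $\theta_0$ by a parameter sharing the same microergodic coefficient but a more convenient variance. Concretely, I would set
\[
\sigma_1^2 = \sigma_0^2 \frac{\alpha_0^{2\nu}}{\alpha_1^{2\nu}}, \qquad \theta_1 = (\sigma_1^2 , \alpha_1),
\]
so that $\sigma_1^2 \alpha_1^{2\nu} = \sigma_0^2 \alpha_0^{2\nu}$ and hence, by Proposition~\ref{prop:microergodic:matern}, $\mathcal{M}_{\theta_1}$ and $\mathcal{M}_{\theta_0}$ are equivalent. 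Since $\hat\sigma^2(\alpha_1)$ is a fixed measurable functional of the path $(\xi(s_i))_{i\in\mathbb N}$, the event $\{\hat\sigma^2(\alpha_1) \to \sigma_1^2\}$ lies in the cylindrical sigma-algebra, and by equivalence it has $\mathcal{M}_{\theta_0}$-probability one as soon as it has $\mathcal{M}_{\theta_1}$-probability one (this is precisely the transfer principle underlying the non-microergodicity lemma above). It therefore suffices to establish that $\hat\sigma^2(\alpha_1) \to \sigma_1^2$ almost surely when $\xi$ has covariance $k_{\theta_1}$.

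Under $\mathcal{M}_{\theta_1}$, the vector $y$ is centered Gaussian with covariance $\sigma_1^2 \Sigma_{\alpha_1}$, where $\Sigma_{\alpha_1}$ is the correlation matrix appearing in~\eqref{eq:hat:sigma:matern}. Whitening with $w = \sigma_1^{-1} \Sigma_{\alpha_1}^{-1/2} y \sim \mathcal{N}(0,I_n)$ and using~\eqref{eq:hat:sigma:matern} yields the exact identity
\[
\hat\sigma^2(\alpha_1) = \frac{1}{n} y^\top \Sigma_{\alpha_1}^{-1} y = \frac{\sigma_1^2}{n} \sum_{i=1}^n w_i^2 .
\]
Thus under $\mathcal{M}_{\theta_1}$ the estimator is exactly $\sigma_1^2$ times a normalized chi-square, with mean $\sigma_1^2$. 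The entire point of working under $\mathcal{M}_{\theta_1}$ rather than $\mathcal{M}_{\theta_0}$ is that this identity is exact: a direct computation under $\mathcal{M}_{\theta_0}$ would instead give $\mathbb E_{\theta_0}[\hat\sigma^2(\alpha_1)] = (\sigma_0^2/n)\,\mathrm{tr}(\Sigma_{\alpha_1}^{-1} \Sigma_{\alpha_0})$, a quantity not controllable by elementary means.

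To upgrade this to almost sure convergence I would \emph{not} invoke the strong law of large numbers directly, because the whitening matrix $\Sigma_{\alpha_1}^{-1/2}$, and hence the joint law of $(w_1,\dots,w_n)$, changes with $n$: the $w_i$ form a triangular array, not a fixed i.i.d.\ sequence. Noticing this triangular-array subtlety is the one genuinely delicate point. Instead I would bound a higher moment, using that the fourth central moment of a $\chi^2_n$ variable equals $12 n^2 + 48 n$, to obtain
\[
\mathbb{E}_{\theta_1}\!\left[ \left( \hat\sigma^2(\alpha_1) - \sigma_1^2 \right)^4 \right] = \frac{\sigma_1^8}{n^4}\left( 12 n^2 + 48 n \right) = O\!\left( \frac{1}{n^2} \right).
\]
By Markov's inequality $\mathcal{M}_{\theta_1}\big(|\hat\sigma^2(\alpha_1) - \sigma_1^2| > \epsilon\big) = O(n^{-2})$ for each $\epsilon>0$, which is summable, so the Borel--Cantelli lemma gives $\hat\sigma^2(\alpha_1) \to \sigma_1^2$ almost surely under $\mathcal{M}_{\theta_1}$.

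Transferring this almost sure event to $\mathcal{M}_{\theta_0}$ by equivalence and multiplying by $\alpha_1^{2\nu}$ yields, $\mathcal{M}_{\theta_0}$-almost surely, $\hat\sigma^2(\alpha_1)\,\alpha_1^{2\nu} \to \sigma_1^2 \alpha_1^{2\nu} = \sigma_0^2 \alpha_0^{2\nu}$, which is the claim. The conceptual heart of the argument, and the only step I expect to require real insight, is the equivalence-of-measures reduction: it converts an intractable computation under the true law into the trivial chi-square identity, at the cost of invoking Proposition~\ref{prop:microergodic:matern}. The remaining analytic content — the moment bound and Borel--Cantelli — is routine once the change of whitening across $n$ is accounted for.
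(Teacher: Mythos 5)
Your proof is correct, but it follows a genuinely different route from the paper's. Both arguments begin identically, setting $\sigma_1^2 = \sigma_0^2 \alpha_0^{2\nu}/\alpha_1^{2\nu}$ so that $\mathcal{M}_{\sigma_1^2,\alpha_1}$ and $\mathcal{M}_{\theta_0}$ are equivalent; they then diverge. The paper works directly under the true measure $\mathcal{M}_{\theta_0}$ and exploits the \emph{orthogonality} half of Proposition \ref{prop:microergodic:matern}: since $\mathcal{M}_{\sigma_1^2 \pm \epsilon,\alpha_1}$ is orthogonal to $\mathcal{M}_{\theta_0}$ while $\mathcal{M}_{\sigma_1^2,\alpha_1}$ is equivalent to it, a likelihood-ratio result from Gikhman and Skorokhod gives $n L_n(\sigma_1^2 \pm \epsilon , \alpha_1) - n L_n(\sigma_1^2,\alpha_1) \to \infty$ almost surely, and convexity of $1/\sigma^2 \mapsto n L_n(\sigma^2,\alpha_1)$, visible from \eqref{eq:expression:lik:variance}, upgrades these two pointwise divergences to a uniform one over $\{ |\sigma^2 - \sigma_1^2| \geq \epsilon \}$, trapping the minimizer. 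You instead use only the \emph{equivalence} half: you compute the exact law of $\hat\sigma^2(\alpha_1)$ under the surrogate measure $\mathcal{M}_{\theta_1}$ (a $\sigma_1^2$-scaled normalized $\chi^2_n$), obtain almost sure convergence there via the fourth central moment $12n^2 + 48n$ of $\chi^2_n$, Markov, and Borel--Cantelli --- correctly flagging that the symmetric-root whitening yields a triangular array, so summable marginal probabilities rather than the SLLN is what closes the argument --- and transfer the almost sure event back through the shared null sets. This buys you an elementary, self-contained computation (no Gikhman--Skorokhod input and no convexity step), made possible by the closed form \eqref{eq:hat:sigma:matern}, and it uses strictly less of Proposition \ref{prop:microergodic:matern}; the paper's route, conversely, never needs the distribution of the estimator and so would survive in settings without an explicit minimizer. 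One streamlining worth knowing: if you whiten with the Cholesky factor of $\Sigma_{\alpha_1}$ instead of the symmetric square root, the nestedness of Cholesky factorizations makes the whitened variables a single i.i.d.\ $\mathcal{N}(0,1)$ sequence $(e_i)_{i \in \mathbb{N}}$ not depending on $n$, so that $\hat\sigma^2(\alpha_1) = (\sigma_1^2/n)\sum_{i=1}^n e_i^2$ with fixed $e_i$, and the ordinary strong law of large numbers replaces your moment/Borel--Cantelli step; the triangular-array subtlety you identified is real for your whitening but can be dissolved rather than worked around.
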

\begin{proof}[sketch]
 Let
\[
\sigma_1^2 = \frac{\sigma_0^2 \alpha_0^{2 \nu}}{\alpha_1^{2 \nu}}.
\]
Let $\epsilon >0$.
 From Proposition \ref{prop:microergodic:matern}, the measures $\mathcal{M}_{\sigma_0^2,\alpha_0}$ and
 $\mathcal{M}_{\sigma_1^2,\alpha_1}$ are equivalent and the measures $\mathcal{M}_{\sigma_0^2,\alpha_0}$ and
 $\mathcal{M}_{\sigma_1^2 + \epsilon,\alpha_1}$ are orthogonal. Hence, \cite{zhang04inconsistent}, based on \cite{gikhman2004theory}, obtains that, almost surely,
 \[
n  L_n(\sigma_1^2 + \epsilon,\alpha_1)
 -
n  L_n( \sigma_1^2  , \alpha_1 )
 \to 
 \infty.
 \]
 Similarly, we can show that, almost surely,
 \[
 n L_n(\sigma_1^2 - \epsilon,\alpha_1)
 -
 n L_n( \sigma_1^2  , \alpha_1 )
 \to 
 \infty.
 \]
Let $\Sigma_{\alpha_1} = R_{\sigma^2 , \alpha_1} / \sigma^2$. 
Then
\begin{equation} \label{eq:expression:lik:variance}
L_n(\sigma^2 , \alpha_1)
=
\log(\sigma^2)
+
\frac{1}{n}
\log(| \Sigma_{\alpha_1} |)
+
\frac{1}{\sigma^2}
\frac{1}{n}
y^\top
\Sigma_{\alpha_1}^{-1}
y.
\end{equation}
Hence, $1/\sigma^2 \mapsto n L_n(\sigma^2 , \alpha_1)$ is convex, and thus by convexity we obtain, as $n \to \infty$,
\[
\left(
\inf_{\substack{ \sigma^2 \in (0 , \infty) \\ |\sigma^2 - \sigma_1^2 | \geq \epsilon}}
n L_n(\sigma^2 , \alpha_1)
\right)
- 
n L_n(\sigma_1^2 , \alpha_1)
\to \infty
\] 
almost surely. This implies that $\hat{\sigma}^2(\alpha_1) \to \sigma_1^2$ almost surely as $n \to \infty$ which concludes the proof.
\end{proof}

In the supplementary material, still for the Mat\'ern covariance functions, we also provide asymptotic normality results for the estimator $\hat{\sigma}^2(\alpha_1) \alpha_1^{2 \nu}$ and for the ``full'' maximum likelihood estimator, where the likelihood is maximized with respect to both $\sigma^2$ and $\alpha$.

We remark that, in general and outside of the Mat\'ern case, consistency results for maximum likelihood under fixed-domain asymptotics are quite scarce. We mention a few such other consistency results at the end of Appendix A in the supplementary material and in Section \ref{sec:conclusion}.

\section{Conclusion} \label{sec:conclusion}

We have presented some asymptotic results on covariance parameter estimation under increasing and fixed-domain asymptotics. The presentation highlights the strong differences between the two settings. Under increasing-domain  asymptotics, with mild identifiability conditions, all the components of the covariance parameter can be estimated consistently, and with asymptotic normality. The proof techniques hold for general families of stationary covariance functions. They are based on the asymptotic independence between most pairs of observations, as $n \to \infty$, that enables to control the logarithm of the likelihood and its gradient and to apply general methods for M-estimators.

In contrast, under fixed-domain asymptotics, typically all pairs of observations have a covariance that is not small.
As a consequence some components of  the covariance parameter can not be estimated consistently, even if changing the component changes the covariance function. The notion of equivalence of Gaussian measures, yielding the notion of microergodicity, is central. The results and proofs are not general in the current state of the literature. Here we have presented results and proofs related to the family of isotropic Mat\'ern covariance functions in dimension $d=1,2,3$. The presented proofs rely on the Fourier transforms of these covariance functions (through the results taken from the cited references) and also on the explicit expression of the logarithm of the likelihood as a function of the variance parameter $\sigma^2$.

There are many other existing contributions in the literature that we have not presented here. Under increasing-domain asymptotics, earlier results on maximum likelihood were provided by \cite{mardia84maximum}, using general results from \cite{UANMLE} (the latter not necessarily considering Gaussian processes). Restricted maximum likelihood was then studied in \cite{cressie93asymptotic}. Cross validation was considered in \cite{bachoc14asymptotic,bachoc2018asymptotic}. Extensions to transformed Gaussian processes were studied in \cite{bachoc2019asymptotic}. Pairwise likelihood was studied in \cite{bevilacqua15comparing}. Multivariate processes were considered in
\cite{furrer2016asymptotic,shaby12tapered}. Finally, more generally, the increasing-domain asymptotic framework is investigated in spatial statistics for instance in \cite{hallin09local,lahiri16central,Lahiri03central,Lahiri04asymptotic}.

Under fixed-domain asymptotics, earlier results for the estimation of the microergodic parameter in the family of exponential covariance functions in dimension one were obtained in \cite{Ying91}. The estimation of parameters for the Brownian motion is addressed in \cite{CGCVMMLEPSP}.
Various additional results on maximum likelihood are obtained in \cite{FDASMTGRF,ESCMSGRFM,VanDerVaart96maximum,ying93maximum}.
Variation-based estimators are studied in \cite{anderes2010consistent,blanke2014global,IL97,loh2015estimating}.
Composite likelihood is addressed in \cite{bachoc2019composite}.
The case of covariance parameter estimation for 
constrained Gaussian processes is addressed in \cite{bachoc2019maximum,lopez2018finite}. Cross validation is addressed in \cite{bachoc2017cross}.
Finally, extensions of  the fixed-domain asymptotic results presented here to the family of isotropic Wendland covariance functions are provided in \cite{bevilacqua2019estimation}.

\appendix

\section{Asymptotic normality for the estimation of the microergodic parameter of the isotropic Mat\'ern model}

In the case of the Mat\'ern covariance functions, in \cite{DuZhaMan2009}, a central limit theorem is proved for the same estimator $\hat{\sigma}^2(\alpha_1) \alpha_1^{2 \nu}$ as in \cite{zhang04inconsistent}. As in Section 4.3, we let $d \in \{1,2,3\}$.

\begin{theorem}[\cite{DuZhaMan2009}] \label{theorem:fixed:domain:normality}
Let $\alpha_1$ be any fixed element of $[\alpha_{\inf} , \alpha_{\sup}]$. 
As $n \to \infty$, 
\[
\sqrt{n} \left(
\hat{\sigma}^2(\alpha_1) \alpha_1^{2 \nu}
- 
\sigma_0^2 \alpha_0^{2 \nu}
\right)
\to_{n \to \infty}^d
\mathcal{N}( 0 , 2 (\sigma_0^2 \alpha_0^{2 \nu})^2).
\]
\end{theorem}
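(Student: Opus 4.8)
The plan is to write the statistic as a quadratic form in a standard Gaussian vector, to diagonalize, and then to reduce the central limit theorem to two precise facts about the eigenvalues of a ratio of Matérn covariance matrices, the control of which ultimately rests on the spectral density \eqref{eq:matern:FT} and the restriction $d \le 3$. First I set $\sigma_1^2 = \sigma_0^2 \alpha_0^{2\nu} / \alpha_1^{2\nu}$ and $\theta_1 = (\sigma_1^2,\alpha_1)$, so that by Proposition \ref{prop:microergodic:matern} the measures $\mathcal{M}_{\theta_0}$ and $\mathcal{M}_{\theta_1}$ are equivalent. Using \eqref{eq:hat:sigma:matern} and $R_{\theta_1} = \sigma_1^2 \Sigma_{\alpha_1}$, a direct computation gives
\[
\hat{\sigma}^2(\alpha_1)\alpha_1^{2\nu}
=
\frac{\sigma_0^2 \alpha_0^{2\nu}}{n}\,
y^\top R_{\theta_1}^{-1} y.
\]
Since $y$ is centered Gaussian with covariance $R_{\theta_0}$, the vector $R_{\theta_1}^{-1/2} y$ is centered Gaussian with covariance $B_n := R_{\theta_1}^{-1/2} R_{\theta_0} R_{\theta_1}^{-1/2}$. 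Letting $\mu_{1,n},\ldots,\mu_{n,n}$ be the eigenvalues of $B_n$ (equivalently of $R_{\theta_1}^{-1} R_{\theta_0}$) and $(w_1,\ldots,w_n) \sim \mathcal{N}(0,I_n)$, I obtain $y^\top R_{\theta_1}^{-1} y \stackrel{d}{=} \sum_{i=1}^n \mu_{i,n} w_i^2$.

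Next I would decompose the centered and rescaled statistic as
\[
\sqrt{n}\left( \hat{\sigma}^2(\alpha_1)\alpha_1^{2\nu} - \sigma_0^2 \alpha_0^{2\nu} \right)
\stackrel{d}{=}
\frac{\sigma_0^2 \alpha_0^{2\nu}}{\sqrt{n}} \sum_{i=1}^n \mu_{i,n}(w_i^2 - 1)
+
\frac{\sigma_0^2 \alpha_0^{2\nu}}{\sqrt{n}} \sum_{i=1}^n (\mu_{i,n} - 1).
\]
The second, deterministic term is a bias equal to $\sigma_0^2\alpha_0^{2\nu}\, n^{-1/2}(\mathrm{tr}(B_n) - n)$, and I must show it is $o(1)$, that is $\mathrm{tr}(B_n) - n = o(\sqrt{n})$. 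The first term is a centered weighted sum of independent $\chi^2_1 - 1$ variables, with variance $2(\sigma_0^2\alpha_0^{2\nu})^2 n^{-1}\mathrm{tr}(B_n^2)$, so to match the announced limiting variance I must show $n^{-1}\mathrm{tr}(B_n^2) = n^{-1}\sum_i \mu_{i,n}^2 \to 1$.

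The hard part is establishing these two trace asymptotics, and this is where equivalence of measures must be made quantitative. The heuristic is that if $y$ were drawn from the equivalent measure $\mathcal{M}_{\theta_1}$ one would have $B_n = I_n$ exactly, so that $\hat{\sigma}^2(\alpha_1)\alpha_1^{2\nu}$ would be $\sigma_0^2\alpha_0^{2\nu}$ times an average of $n$ independent $\chi^2_1$, and the classical central limit theorem would give exactly the stated Gaussian limit with variance $2(\sigma_0^2\alpha_0^{2\nu})^2$. The difficulty is that mere equivalence does not transfer a central limit statement, since the change of measure can perturb fluctuations at the $\sqrt{n}$ scale; one must show that replacing $\mathcal{M}_{\theta_1}$ by the true $\mathcal{M}_{\theta_0}$ leaves these fluctuations asymptotically unchanged. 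Because $\sigma_1^2\alpha_1^{2\nu} = \sigma_0^2\alpha_0^{2\nu}$, the prefactors of $\hat{k}_{\theta_0}$ and $\hat{k}_{\theta_1}$ in \eqref{eq:matern:FT} coincide, whence
\[
\frac{\hat{k}_{\theta_0}(\omega)}{\hat{k}_{\theta_1}(\omega)}
=
\left( \frac{\alpha_1^2 + ||\omega||^2}{\alpha_0^2 + ||\omega||^2} \right)^{\nu + d/2}
= 1 + O\!\left( ||\omega||^{-2} \right)
\quad \text{as } ||\omega|| \to \infty,
\]
so that $\int_{\mathbb{R}^d} ( \hat{k}_{\theta_0}/\hat{k}_{\theta_1} - 1 )^2 \, d\omega < \infty$ precisely when $d < 4$. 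Following \cite{DuZhaMan2009}, this square-integrability of the spectral ratio, together with its boundedness away from $0$ and $\infty$, is translated into spectral estimates on the finite sampling matrices $B_n$, yielding both $\mathrm{tr}(B_n) - n = o(\sqrt{n})$ and $n^{-1}\mathrm{tr}(B_n^2) \to 1$, as well as the uniform bound $\max_i \mu_{i,n} = O(1)$. Carrying out this passage from spectral densities to eigenvalues of $B_n$ is the technical heart of the argument, and it is exactly here that the assumption $d \in \{1,2,3\}$ is used.

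With these facts in hand I would conclude as in the proof of Theorem \ref{thm:increasing:normality}. The bias term vanishes, and the fluctuation term is a sum of independent centered summands $\sigma_0^2\alpha_0^{2\nu}\, n^{-1/2}\mu_{i,n}(w_i^2 - 1)$ whose variances sum to $2(\sigma_0^2\alpha_0^{2\nu})^2 + o(1)$; since $\max_i \mu_{i,n} = O(1)$, a Lyapunov bound on fourth moments gives $\sum_i \mathbb{E}| \sigma_0^2\alpha_0^{2\nu} n^{-1/2}\mu_{i,n}(w_i^2-1) |^4 = O(1/n) \to 0$, so the Lindeberg condition holds and the Lindeberg--Feller central limit theorem yields convergence of the fluctuation term to $\mathcal{N}(0, 2(\sigma_0^2\alpha_0^{2\nu})^2)$. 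Combining this with the vanishing bias via Slutsky's lemma gives the claimed convergence.
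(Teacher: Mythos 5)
Your proof is correct, and its technical core is the same as the paper's --- both delegate the hard spectral estimates for the Mat\'ern model to \cite{DuZhaMan2009} --- but you organize the argument around a genuinely different decomposition. The paper pivots through the estimator at the \emph{true} range parameter: it quotes from \cite{DuZhaMan2009} that $\hat{\sigma}^2(\alpha_1)\alpha_1^{2\nu} - \hat{\sigma}^2(\alpha_0)\alpha_0^{2\nu} = o_p(n^{-1/2})$, observes that $n\hat{\sigma}^2(\alpha_0)/\sigma_0^2 = y^\top R_{\theta_0}^{-1}y$ is exactly a $\chi^2_n$ (a sum of squares of $n$ i.i.d.\ standard Gaussians), applies the classical central limit theorem, and concludes by Slutsky. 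You instead center at the equivalent parameter $\theta_1=(\sigma_1^2,\alpha_1)$, diagonalize $B_n = R_{\theta_1}^{-1/2}R_{\theta_0}R_{\theta_1}^{-1/2}$, and split the statistic into a deterministic bias $\sigma_0^2\alpha_0^{2\nu}\,n^{-1/2}(\mathrm{tr}(B_n)-n)$ plus a weighted chi-square fluctuation handled by Lindeberg--Feller, as in the proof of Theorem \ref{thm:increasing:normality}. The two formulations are in fact equivalent: writing $\sqrt{n}\,\bigl(\hat{\sigma}^2(\alpha_1)\alpha_1^{2\nu}-\hat{\sigma}^2(\alpha_0)\alpha_0^{2\nu}\bigr) = \sigma_0^2\alpha_0^{2\nu}\,n^{-1/2}\,y^\top\bigl(R_{\theta_1}^{-1}-R_{\theta_0}^{-1}\bigr)y$, this quantity has mean $\sigma_0^2\alpha_0^{2\nu}\,n^{-1/2}\,(\mathrm{tr}(B_n)-n)$ and variance $2(\sigma_0^2\alpha_0^{2\nu})^2\, n^{-1}\,\mathrm{tr}\bigl((B_n-I_n)^2\bigr)$, so your two trace conditions are precisely what the cited $o_p(n^{-1/2})$ statement encodes. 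What your version buys is transparency: it makes explicit where microergodicity enters (the prefactors in \eqref{eq:matern:FT} cancel because $\sigma_1^2\alpha_1^{2\nu}=\sigma_0^2\alpha_0^{2\nu}$, leaving a spectral ratio $1+O(\|\omega\|^{-2})$, square-integrable exactly when $d<4$), and you rightly flag that mere equivalence of measures is insufficient at the $\sqrt{n}$ scale --- indeed the Hilbert--Schmidt bound $\sum_i(\mu_{i,n}-1)^2=O(1)$ coming from equivalence only gives $\mathrm{tr}(B_n)-n = O(\sqrt{n})$ by Cauchy--Schwarz, so the $o(\sqrt{n})$ bias estimate genuinely requires the finer analysis of \cite{DuZhaMan2009}. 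One small remark: your bound $\max_i \mu_{i,n}=O(1)$ does not need that reference; since the spectral ratio is bounded above, the Fourier argument \eqref{eq:quadratic:form:FT} from the proof of Lemma \ref{lem:smallest:eigenvalue} gives $R_{\theta_0} \preceq c\, R_{\theta_1}$ as quadratic forms, hence $\mu_{i,n}\leq c$ directly. The paper's route is shorter once the comparison with $\hat{\sigma}^2(\alpha_0)$ is granted, since the limit law comes for free from an exact $\chi^2_n$; yours isolates exactly which eigenvalue asymptotics carry the theorem.
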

\begin{proof}[{\bf Proof (sketch)}]
In \cite{DuZhaMan2009}, it is shown (after an involved and technical proof that is based in particular on the Fourier transform expression of the Mat\'ern covariance function) that
\[
\hat{\sigma}^2(\alpha_1)
\alpha_1^{2 \nu}
-
\hat{\sigma}^2(\alpha_0)
\alpha_0^{2 \nu}
=
o_p
\left(
\frac{1}{\sqrt{n}}
\right).
\] 
Hence,
\begin{equation} \label{eq:for:using:slutsky}
\sqrt{n} \left(
\hat{\sigma}^2(\alpha_1) \alpha_1^{2 \nu}
- 
\sigma_0^2 \alpha_0^{2 \nu}
\right)
=
\sqrt{n} \left(
\hat{\sigma}^2(\alpha_0) \alpha_0^{2 \nu}
- 
\sigma_0^2 \alpha_0^{2 \nu}
\right)
+o_p(1).
\end{equation}
We have
\[
\sqrt{n} \left(
\hat{\sigma}^2(\alpha_0) \alpha_0^{2 \nu}
- 
\sigma_0^2 \alpha_0^{2 \nu}
\right)
=
\sigma_0^2 \alpha_0^{2 \nu}
\frac{1}{\sqrt{n}}
\left(
y^\top R_{\theta_0}^{-1} y
-
\mathbb{E}
(
y^\top R_{\theta_0}^{-1} y
)
\right).
\]
Since $y^\top R_{\theta_0}^{-1} y$ is a sum of squares of independent standard Gaussian variables, we have
\[
\sqrt{n} \left(
\hat{\sigma}^2(\alpha_0) \alpha_0^{2 \nu}
- 
\sigma_0^2 \alpha_0^{2 \nu}
\right)
\to_{n \to \infty}^d
\mathcal{N}( 0 , 2 (\sigma_0^2 \alpha_0^{2 \nu})^2 ).
\]
Hence, from \eqref{eq:for:using:slutsky} and Slutsky's lemma,
\[
\sqrt{n} \left(
\hat{\sigma}^2(\alpha_1) \alpha_1^{2 \nu}
- 
\sigma_0^2 \alpha_0^{2 \nu}
\right)
\to_{n \to \infty}^d
\mathcal{N}( 0 , 2 (\sigma_0^2 \alpha_0^{2 \nu})^2 ).
\]
This concludes the proof.
\end{proof}

Finally, the above central limit theorem relies on an arbitrary fixed choice of $\alpha_1$. Later, this central limit theorem was refined by  \cite{ShaKau2013}, based on intermediary results from \cite{WanLoh2011}, to allow for an arbitrary estimator of $\alpha_0$. More precisely, \cite{ShaKau2013} proves the following.

\begin{theorem}[\cite{ShaKau2013}] \label{theorem:fixed:domain:normality:two}
Let $(\hat{\alpha}_n)_{n \in \mathbb{N}}$ be any sequence of random variables in $[\alpha_{\inf} , \alpha_{\sup}]$. Then 
as $n \to \infty$, 
\[
\sqrt{n} \left(
\hat{\sigma}^2(\hat{\alpha}_n) \hat{\alpha}_n^{2 \nu}
- 
\sigma_0^2 \alpha_0^{2 \nu}
\right)
\to_{n \to \infty}^d
\mathcal{N}( 0 , 2 (\sigma_0^2 \alpha_0^{2\nu})^2).
\]
\end{theorem}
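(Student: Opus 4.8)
The plan is to deduce this refinement from Theorem~\ref{theorem:fixed:domain:normality} by showing that the estimated microergodic parameter is asymptotically insensitive, at the scale $1/\sqrt{n}$, to the value of $\alpha$ that is plugged in. Write $g_n(\alpha) = \hat{\sigma}^2(\alpha)\, \alpha^{2\nu}$ for $\alpha \in [\alpha_{\inf},\alpha_{\sup}]$, so that by \eqref{eq:hat:sigma:matern} one has $g_n(\alpha) = (\alpha^{2\nu}/n)\, y^\top \Sigma_{\alpha}^{-1} y$, and recall that $\alpha_0 \in [\alpha_{\inf},\alpha_{\sup}]$ since $\theta_0 \in \Theta$. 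The central object I would establish is the \emph{uniform} negligibility
\[
\sup_{\alpha \in [\alpha_{\inf},\alpha_{\sup}]}
\left| g_n(\alpha) - g_n(\alpha_0) \right|
=
o_p\!\left( \frac{1}{\sqrt{n}} \right),
\]
which upgrades, to a supremum over the whole interval, the pointwise estimate $g_n(\alpha_1) - g_n(\alpha_0) = o_p(n^{-1/2})$ already exploited in the proof of Theorem~\ref{theorem:fixed:domain:normality}.

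Granting this uniform bound, the conclusion follows with no further probabilistic input, and it does so for \emph{any} sequence $(\hat{\alpha}_n)$ valued in $[\alpha_{\inf},\alpha_{\sup}]$ — measurable or not, consistent or not — which is precisely the point of the statement. Indeed, since $\hat{\alpha}_n \in [\alpha_{\inf},\alpha_{\sup}]$ almost surely, the random increment is dominated pathwise by the supremum,
\[
\left| g_n(\hat{\alpha}_n) - g_n(\alpha_0) \right|
\leq
\sup_{\alpha \in [\alpha_{\inf},\alpha_{\sup}]}
\left| g_n(\alpha) - g_n(\alpha_0) \right|
=
o_p\!\left( \frac{1}{\sqrt{n}} \right),
\]
so that $\sqrt{n}\,(g_n(\hat{\alpha}_n) - \sigma_0^2 \alpha_0^{2\nu}) = \sqrt{n}\,(g_n(\alpha_0) - \sigma_0^2 \alpha_0^{2\nu}) + o_p(1)$. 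The anchor term on the right is exactly the quantity controlled by Theorem~\ref{theorem:fixed:domain:normality} applied with the fixed value $\alpha_1 = \alpha_0$: writing $R_{\theta_0} = \sigma_0^2 \Sigma_{\alpha_0}$ gives $g_n(\alpha_0) = (\sigma_0^2 \alpha_0^{2\nu}/n)\, y^\top R_{\theta_0}^{-1} y$, and with $z = R_{\theta_0}^{-1/2} y \sim \mathcal{N}(0,I_n)$ the quadratic form $y^\top R_{\theta_0}^{-1} y = \sum_{i=1}^n z_i^2$ is a sum of i.i.d. $\chi^2_1$ variables, whence $\sqrt{n}\,(g_n(\alpha_0) - \sigma_0^2 \alpha_0^{2\nu}) \to^d \mathcal{N}(0, 2(\sigma_0^2 \alpha_0^{2\nu})^2)$ by the ordinary central limit theorem. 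Slutsky's lemma then transfers this limit to $\sqrt{n}\,(g_n(\hat{\alpha}_n) - \sigma_0^2 \alpha_0^{2\nu})$, which is the claim.

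The main obstacle is therefore entirely contained in the uniform estimate, and this is where the delicate analysis of \cite{WanLoh2011,ShaKau2013} enters. As the discussion at the start of Section~\ref{sec:fixed:domain} makes clear, under fixed-domain asymptotics one loses the spectral control available in the increasing-domain setting — the eigenvalues of $R_{\theta}$ degenerate as $n \to \infty$ — so the pointwise argument of \cite{DuZhaMan2009} cannot simply be made uniform by continuity and compactness, and genuinely uniform Fourier-transform estimates on the Matérn spectral density \eqref{eq:matern:FT} are needed. Concretely, I would try to bound the $\alpha$-increments of $g_n$, for instance through a bound $\sup_{\alpha} | \partial g_n(\alpha)/\partial \alpha | = o_p(n^{-1/2})$, which by the mean value theorem and $|\hat{\alpha}_n - \alpha_0| \leq \alpha_{\sup} - \alpha_{\inf}$ yields the displayed supremum bound. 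Differentiating and using $\partial_\alpha \Sigma_\alpha^{-1} = -\Sigma_\alpha^{-1}(\partial_\alpha \Sigma_\alpha)\Sigma_\alpha^{-1}$ expands this derivative into quadratic forms in $y$ whose means and variances must be estimated spectrally; the key structural input is that, because $\mathcal{M}_{\sigma_0^2(\alpha_0/\alpha)^{2\nu},\,\alpha}$ and $\mathcal{M}_{\theta_0}$ are equivalent for every $\alpha$ (Proposition~\ref{prop:microergodic:matern}), the relevant trace and Hilbert--Schmidt quantities are small by Feldman--Hájek type considerations. Making these increments uniformly smaller than $1/\sqrt{n}$ despite the degeneracy of the covariance matrices is the technical heart of the theorem and is exactly the content borrowed from \cite{WanLoh2011}.
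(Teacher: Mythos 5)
First, note that the paper itself states this theorem without proof, quoting it from the cited reference; so the comparison here is with the argument of that reference. Your overall architecture is exactly the right one, and it is the one actually used there: reduce to the anchor $g_n(\alpha_0)$ via a uniform bound $\sup_{\alpha \in [\alpha_{\inf},\alpha_{\sup}]} |g_n(\alpha) - g_n(\alpha_0)| = o_p(n^{-1/2})$, observe that $g_n(\alpha_0) = (\sigma_0^2\alpha_0^{2\nu}/n)\, y^\top R_{\theta_0}^{-1} y$ is, up to the factor $\sigma_0^2\alpha_0^{2\nu}$, a normalized $\chi^2_n$ so the ordinary central limit theorem applies, and finish with Slutsky's lemma; this part of your write-up is correct and matches the computation in the proof of Theorem \ref{theorem:fixed:domain:normality}.

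The gap is in how you propose to obtain the uniform estimate. The bound $\sup_{\alpha} |\partial g_n(\alpha)/\partial \alpha| = o_p(n^{-1/2})$ is strictly stronger than what is needed, you give no argument for it beyond an appeal to ``Feldman--H\'ajek type considerations,'' and it is not how the known proof works — so as written the technical heart of the theorem is missing rather than proved. The mechanism that actually closes this gap is an elementary \emph{monotonicity} property, which bypasses derivative estimates entirely. From the Mat\'ern spectral density \eqref{eq:matern:FT}, the function $\omega \mapsto \alpha^{-2\nu} \hat{k}_{(1,\alpha)}(\omega) = c_{\nu,d}\,(\alpha^2 + ||\omega||^2)^{-(\nu + d/2)}$ is pointwise decreasing in $\alpha$; by the quadratic-form identity \eqref{eq:quadratic:form:FT}, the matrices $\alpha^{-2\nu}\Sigma_{\alpha}$ are therefore decreasing in $\alpha$ in the Loewner order, hence $\alpha^{2\nu}\Sigma_{\alpha}^{-1}$ is increasing, and so for every fixed $n$ and every realization of $y$ the map $\alpha \mapsto g_n(\alpha) = \alpha^{2\nu} y^\top \Sigma_{\alpha}^{-1} y / n$ (see \eqref{eq:hat:sigma:matern}) is nondecreasing on $[\alpha_{\inf},\alpha_{\sup}]$. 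Consequently
\begin{equation*}
\sup_{\alpha \in [\alpha_{\inf},\alpha_{\sup}]} \left| g_n(\alpha) - g_n(\alpha_0) \right|
\leq
\max\left\{ g_n(\alpha_{\sup}) - g_n(\alpha_0) ,\; g_n(\alpha_0) - g_n(\alpha_{\inf}) \right\},
\end{equation*}
and the right-hand side is $o_p(n^{-1/2})$ by applying, at the two fixed endpoints $\alpha_{\sup}$ and $\alpha_{\inf}$ only, the pointwise result $\hat{\sigma}^2(\alpha_1)\alpha_1^{2\nu} - \hat{\sigma}^2(\alpha_0)\alpha_0^{2\nu} = o_p(n^{-1/2})$ already invoked in the proof of Theorem \ref{theorem:fixed:domain:normality}. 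This is precisely why the cited proof technique requires $\alpha_{\inf}$ and $\alpha_{\sup}$ to be fixed: the supremum is reduced to two fixed-$\alpha$ instances of the hard analytic input, rather than to a genuinely uniform spectral estimate. If you replace your mean-value-theorem paragraph by this sandwich argument, your proof becomes complete modulo the same citations the paper itself relies on.
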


In particular, with a maximum likelihood estimator $(\hat{\sigma}^2_{\text{ML}},\hat{\alpha}_{\text{ML}})$, we have
\[
\hat{\sigma}_{\text{ML}}^2
=
\hat{\sigma}^2
(\hat{\alpha}_{\text{ML}})
\]
and thus Theorem \ref{theorem:fixed:domain:normality:two} implies 
\[
\sqrt{n} \left(
\hat{\sigma}_{\text{ML}}^2 \hat{\alpha}_{\text{ML}}^{2 \nu}
- 
\sigma_0^2 \alpha_0^{2 \nu}
\right)
\to_{n \to \infty}^d
\mathcal{N}( 0 , 2 (\sigma_0^2 \alpha_0^{2 \nu})^2),
\]
which is the asymptotic normality of maximum likelihood estimators of the microergodic parameter in the family of isotropic Mat\'ern covariance functions. Note that $\hat{\sigma}_{\text{ML}}^2$ and $\hat{\alpha}_{\text{ML}}$ typically do not converge separately to fixed quantities \cite{zhang05toward}.

We remark that the bounds $\alpha_{\inf}$ and $\alpha_{\sup}$ that define maximum likelihood estimators have no impact on Theorem \ref{theorem:fixed:domain:normality:two} as long as they are finite, non-zero and fixed independently of $n$. The proof techniques of \cite{ShaKau2013} do not allow for $\alpha_{\inf}$ and $\alpha_{\sup}$ depending on $n$ and going to zero or infinity. In practice, one also usually takes bounds $0 < \alpha_{\inf} <\alpha_{\sup} < \infty$ for $\alpha$ to implement maximum likelihood. Then a common practice is to take $\alpha_{\inf}$ and $\alpha_{\sup}$ of the same orders   as the inverses of the maximum and minimum distances between two distinct observation points.

Notice that all the results reviewed here for the Mat\'ern model assume $\nu$ to be fixed and known. We are not aware of any consistency results of maximum likelihood estimators of $\nu$ under fixed-domain asymptotics. Nevertheless, there exist other estimation techniques than maximum likelihood, that are shown to be able to estimate $\nu$ consistently, in particular variation-based estimators \cite{blanke2014global,IL97,loh2015estimating}.

We also remark that the results and proof techniques of Section 4 and Appendix A are intrinsically specific to the Mat\'ern covariance functions. Nevertheless, \cite{bevilacqua2019estimation} recently managed to extend them to the family of Wendland covariance functions. Finding other families of covariance functions for which similar extensions would be possible is an interesting topic for future research.

\section{Expectations and covariances of quadratic forms of a Gaussian vector}

Let $r \in \mathbb{N}$, let $V$ be a centered $r \times 1$ Gaussian vector and let $A$ and $B$ be fixed $r \times r$ matrices. Then we have
\begin{equation*} 
\mathbb{E}( V^\top A V ) = 
\mathrm{tr} \left(  A \mathrm{cov}(V) \right)
\end{equation*}
and
\begin{equation*} 
\mathrm{cov}( V^\top A V , V^\top B V ) = 
2 \mathrm{tr} \left(  A \mathrm{cov}(V)  B \mathrm{cov}(V) \right)
\end{equation*}
from, for instance, (A.6) and (A.7) in \cite{paolella2018linear}.

\end{document}